\newcommand{\expect}[1]{\mathbb{E}\left[#1\right]}
\newtheorem{theorem}{Theorem}
\newtheorem{lemma}{Lemma}
\newtheorem{assumption}{Assumption}
\newtheorem{corollary}{Corollary}
\newtheorem{definition}{Definition}
\begin{document}
\title{A Probabilistic Sample Path Convergence Time Analysis of Drift-Plus-Penalty Algorithm for Stochastic Optimization}


\author{Xiaohan Wei, Hao Yu and Michael J. Neely
\thanks{The authors are with the Electrical Engineering department at the University of Southern California, Los Angeles, CA.}

}

\maketitle

\begin{abstract}
This paper considers the problem of minimizing the time average of a controlled stochastic process subject to multiple time average constraints on other related processes. The probability distribution of the random events in the system is unknown to the controller. A typical application is time average power minimization subject to network throughput constraints for different users in a network with time varying channel conditions. We show that with probability at least $1-2\delta$, the classical drift-plus-penalty algorithm provides a sample path $\mathcal{O}(\varepsilon)$ approximation to optimality with a convergence time $\frac{1}{\varepsilon^2}\max\left\{\log^2\frac1\varepsilon\log\frac2\delta,~\log^3\frac2\delta\right\}$, where $\varepsilon>0$ is a parameter related to the algorithm. When there is only one constraint, we further show that the convergence time can be improved to $\frac{1}{\varepsilon^2}\log^2\frac1\delta$.
\end{abstract}




\section{Introduction}
Consider a slotted time system with slots $t\in\{1,2,3,\ldots\}$ with an independent and identically distributed (i.i.d.) process $\{w[t]\}_{t=1}^{\infty}$, which takes values in an arbitrary set $W$ with a probability distribution unknown to the controller.
At each time slot, the controller observes the realization $w[t]$ and picks a decision vector $\mathbf{z}[t]\triangleq(z_0[t],~z_1[t],\ldots,~z_L[t])\in\mathcal{A}(w[t])$, where $\mathcal{A}(w[t])\in\mathbb{R}^{L+1}$ is an option set which possibly depends on $w[t]$.

The goal is to minimize the time average of the objective $z_0[t]$ subject to $L$ time average constraints on processes $z_{l}[t]$, $l=0,1,\ldots,L$, respectively. Let
\begin{align*}
\overline{z}_l=&\limsup_{T\rightarrow\infty}\frac{1}{T}\sum_{t=1}^Tz_l[t],~l\in\{0,1,2,\ldots,L\}.
\end{align*}
Then we write the stochastic optimization problem as
\begin{align}
\min~~&\overline{z}_0\label{obj-problem1}\\
\textrm{s.t.}~~&\overline{z}_l\leq0,~~\forall l\in\{1,2,\ldots,L\},\label{constraint-1-problem1}\\
&\mathbf{z}[t]\in\mathcal{A}(w[t]),~\forall t\in\{1,2,\ldots\},\label{constraint-2-problem1}
\end{align}
where both the minimum and constraints are taken in an almost sure (probability 1) sense. We assume the problem is feasible and the minimum does exist.

\subsection{Related problems and applications}
Problems with the above formulation is common in wireless communications and networking. For example, $w[t]$ can represent a vector of the time varying channel conditions, and $\{z_0[t],\ldots,~z_L[t]\}$ include instantaneous communication rates, power allocations and other metrics for different devices in the network. Specific examples involving this formulation include beamforming (\cite{SDL06}, \cite{NST09}), cognitive radio networks (\cite{QCS08}), energy-aware task scheduling (\cite{Neely2010}, \cite{scheduling-shroff}, \cite{stochastic-scheduling-shroff}) and stock market trading (\cite{Ne10}). In Section \ref{section:simulation}, we present a concrete example of formulation \eqref{obj-problem1}-\eqref{constraint-2-problem1} related to dynamic server scheduling. 

Furthermore, it is shown in Chapter 5 of \cite{Neely2010} that the following more general stochastic convex optimization problem can be mapped back to a problem with structure \eqref{obj-problem1}-\eqref{constraint-2-problem1} via a vector of auxiliary variables:
\begin{align*}
\min~~&f(\overline{\mathbf{z}})\\
\textrm{s.t.}~~&g_k(\overline{\mathbf{z}})\leq0,~~\forall k\in\{1,2,\ldots,K\},\\
&\mathbf{z}[t]\in\mathcal{A}(w[t]),~\forall t\in\{1,2,\ldots\},
\end{align*}
with $f(\cdot)$, $g_k(\cdot)$ continuous and convex and $\mathcal{A}(w[t])$ subset of $\mathbb{R}^M$. The above formulation arises, for example, in network throughput-utility optimization where $\overline{\mathbf{z}}$ represents a vector of achieved throughput, $\mathcal{A}(w[t])$ is a proper subset of $\mathbb{R}^M$ and $f(\cdot)$ is a convex function measuring the network fairness. A typical example of such function when $\mathbf{z}[t]$ is nonnegative is
\[f(\mathbf{z})=-\sum_{m=1}^M\log(1+v_mz_m),\]
where $\{v_m\}_{m=1}^M$ are nonnegative weights.
For more details on network utility optimization, see \cite{network-utility}, \cite{Neely2010}, \cite{atilla-primal-dual-jsac}-\cite{stolyar-greedy}.

\textbf{
Finally, we make several remarks on the i.i.d. assumption of the random event $w[t]$. The i.i.d. assumption is posed here mainly for the ease of analysis, although it appears fairly often in engineering models. For example, in wireless communication scenario, the i.i.d. Rayleigh distribution is often used to model the receiver channel condition. For more detailed elaboration on i.i.d. Rayleigh fading channel, see chapter 7.3.8 of \cite{TV05}.
The i.i.d. assumption is also adopted in a number of other literatures such as \cite{energy-aware}, \cite{ergodic-optimization} and \cite{stochastic-scheduling-shroff}.
}

\subsection{The drift-plus-penalty algorithm}\label{introduction-algorithm}
\textbf{
This subsection briefly introduces the drift-plus-penalty algorithm. This algorithm is previously introduced in \cite{network-utility}, \cite{Neely2010} and \cite{JAM2012} with a provable $\mathcal{O}(\varepsilon)$ approximation solution to \eqref{obj-problem1}-\eqref{constraint-2-problem1} as $T$ goes to infinity. It has been applied to solve various problems in wireless communications and networking (\cite{Neely2010}\cite{GNT06}). For more recent applications in encounter-based network and mobile edge network, see \cite{AWKN16} and \cite{GIT16}. 
}

Define $L$ virtual queues $Q_l[t],~l\in\{1,2,\ldots,L\}$ which are 0 at $t=1$ and updated as follows:
\begin{equation}\label{queue_update}
Q_l[t+1]=\max\left\{Q_l[t]+z_l[t],0\right\}.
\end{equation}
Meanwhile, denote $\mathbf{Q}[t]=(Q_1[t],~Q_2[t],~\ldots~,~Q_L[t])$. The basic intuition behind the virtual queue idea is that if an algorithm can stabilize $Q_l[t],~\forall l\in\{1,2,\ldots,L\}$, then, the average ``rate'' $\overline{z}_l$ is below 0 and the constraints are satisfied.
Then, the algorithm proceeds as follows via a fixed trade off parameter $V>0$:
\begin{itemize}
  \item At the beginning of each time slot $t$, observe $w[t]$, $Q_l[t]$ and take $\mathbf{z}[t]\in\mathcal{A}(w[t])$ so as to solve the following unconstrained optimization problem:
      \begin{equation}\label{dpp_minimization}
      \min_{\mathbf{z}[t]\in\mathcal{A}(w[t])}~~Vz_0[t]+\sum_{l=1}^LQ_l[t]z_l[t].
      \end{equation}
      In other words, the solution treats $Q_l[t]$ and $w[t]$ as given constants and chooses $\mathbf{z}[t]$ in $\mathcal{A}(w[t])$ to minimize the above expression.
  \item Update the virtual queues
      \[Q_l[t+1]=\max\{Q_l[t]+z_l[t],0\}~~\forall l\in\{1,2,\ldots,L\}.\]
\end{itemize}

In the current work, we focus on its
sample path analysis in finite time: It computes the convergence time required to achieve an $\mathcal{O}(\varepsilon)$ approximation with high probability, as discussed in the next subsections.

\subsection{Related algorithms and convergence time}
The algorithm introduced in the last section is closely related to the idea of opportunistic scheduling, which was pioneered by Tassiulas and Ephremides in \cite{queue-stable-tassiulas-1} and \cite{queue-stable-tassiulas-2}. A max-weight algorithm was first introduced in their works to stabilize multiple parallel queues in data networks. The drift-plus-penalty algorithm builds upon max-weight algorithm to further maximize the network utility or minimize energy consumption while stabilizing the queues in the network at the same time (\cite{network-utility} and \cite{Neely2010}). A sample path asymptotic analysis is presented in \cite{JAM2012} using the strong law of large numbers for supermartingale difference sequences. Under mild assumptions on $\mathbf{z}[t]$, it shows that the drift-plus-penalty algorithm satisfies constraints \eqref{constraint-1-problem1}-\eqref{constraint-2-problem1} and achieves the near optimality
\[\overline{z}_0\leq z^{opt}+\mathcal{O}(\varepsilon),\]
with probability 1, where $z^{opt}$ is the minimum achieved by the optimization problem \eqref{obj-problem1}-\eqref{constraint-2-problem1}. \textbf{Throughout the paper, we use the notation $\mathcal{O}(\varepsilon)$ to hide an absolute constant $M$ meaning for all sufficiently small $\varepsilon$, there exists a constant $M>0$ such that $\overline{z}_0\leq z^{opt}+M\varepsilon$.}

Next, consider the problem of convergence time analysis, i.e., the number of slots needed for the desired near optimality to kick in. Most previous works (such as \cite{correlated-scheduling}, \cite{sucha-convergence} and \cite{energy-aware}) focus on
the expected time average performance and only require the constraints to hold in an expected sense. The work in \cite{correlated-scheduling} proves that the same drift-plus-penalty algorithm described in section \ref{introduction-algorithm} gives an $\mathcal{O}(\varepsilon)$ approximation defined in the following manner:
\begin{align}
\frac{1}{T}\sum_{t=1}^T\expect{z_0[t]}&\leq z^{opt}+\mathcal{O}(\varepsilon),\label{expected-near-optimality}\\
\frac{1}{T}\sum_{t=1}^T\expect{z_l[t]}&\leq \mathcal{O}(\varepsilon),~\forall l\in\{1,2,\ldots,L\}, \label{expect-constraint-violation}
\end{align}
with the convergence time $T=\mathcal{O}(1/\varepsilon^2)$. Work in \cite{energy-aware} demonstrates near-optimal $\mathcal{O}(\log(1/\varepsilon)/\varepsilon)$ convergence time for one constraint. An improved convergence time of $\mathcal{O}(1/\varepsilon)$ is shown in \cite{sucha-convergence} in deterministic problems.

The drift-plus-penalty algorithm can also be viewed as a dual algorithm with averaged primals. A similar stochastic dual algorithm for constrained stochastic optimization in \cite{ergodic-optimization} is shown to satisfy the constraints and achieve the near optimality asymptotically with probability 1 as well. A similar $\mathcal{O}(1/\epsilon^2)$ convergence time result for a dual subgradient method is shown in \cite{ozdaglar-dual-subgradient} in the case of deterministic convex optimization. Related work in \cite{scheduling-shroff} applies a dual subgradient method for non-stochastic optimization in a network scheduling problem. The work in \cite{stochastic-scheduling-shroff} further considers the dual subgradient method with stochastic approximations in network scheduling. Other related optimization methods for
queueing networks are also treated
via fluid limits for Markov chains in \cite{atilla-primal-dual-jsac}-\cite{stolyar-gpd-gen}.

\subsection{Contributions and roadmap to proof}
This paper considers the drift-plus-penalty algorithm for stochastic optimization problem \eqref{obj-problem1}-\eqref{constraint-2-problem1} and, for the first time, gives a sample path convergence time result. Specifically, for a general stochastic optimization of the form \eqref{obj-problem1}-\eqref{constraint-2-problem1},
we show that for any $\delta>0$ and $\varepsilon>0$, with probability at least $1-2\delta$, the drift-plus-penalty algorithm gives an $\mathcal{O}(\varepsilon)$ approximation as follows:
\begin{align*}
\frac{1}{T}\sum_{t=1}^Tz_0[t]&\leq z^{opt}+\mathcal{O}(\varepsilon),\\
\frac{1}{T}\sum_{t=1}^Tz_l[t]&\leq \mathcal{O}(\varepsilon),~\forall l\in\{1,2,\ldots,L\},
\end{align*}
with convergence time $T=\frac{1}{\varepsilon^2}\max\left\{\log^2\frac1\varepsilon\log\frac2\delta,~\log^3\frac2\delta\right\}$. Compared to \eqref{expected-near-optimality}-\eqref{expect-constraint-violation}, we removed the expectations at the cost of an extra logarithm factor on the convergence time. Furthermore, when there is only one time average constraint in \eqref{obj-problem1}-\eqref{constraint-2-problem1} (i.e. $L=1$), we show that the convergence time can be improved to $\frac{1}{\varepsilon^2}\log^2\frac1\delta$.

The proof starts by showing the sum-up drift-plus penalty expression is a supermartingale.
The prime difficulty is that the difference sequence of this supermartingale is potentially unbounded, which prevents us from using established concentration inequalities. We overcome this difficulty by truncation. Specifically, in the general case where there are multiple constraints, we
proceed with the following three steps:
\begin{enumerate}
  \item Truncate the original supermartingale using a stopping time, which gives us another supermartingale with bounded difference.
  \item Show that the tail probability of the original supermartingale is upper bounded by the tail probability of the truncated one plus the probability of occurrence of the stopping time.
  \item Bound the tail probability of the truncated supermartingale by a concentration result and bound the probability of stopping time occurrence by an exponential tail bound of the virtual queue processes.
\end{enumerate}
In the special case where there is only one constraint, we show that performing a truncation using a deterministic constant instead of a stopping time is enough to construct a supermartingale with bounded difference, thereby giving a better convergence time result.

\section{Assumptions and Preliminaries}
\subsection{Basic assumptions}\label{section-assumption}
\begin{assumption}\label{assumption-1}
For any $t\in\{1,2,\ldots\}$, the vector $\mathbf{z}[t]\in\mathcal{A}(w[t])$ satisfies
\begin{align*}
&|z_0[t]|\leq z_{\max},\\
&\sqrt{\sum_{l=1}^Lz_l[t]^2}\leq B.
\end{align*}
where $z_{\max}$ and $B$ are positive constants.
\end{assumption}
In addition, we also need the following compactness assumption.
\begin{assumption}
For any $w\in W$, the set $\mathcal{A}(w)$ is a compact subset of $\mathbb{R}^{L+1}$.
\end{assumption}
This assumption is not crucial in our analysis. However, it guarantees that there is always an optimal solution to \eqref{dpp_minimization} within the drift-plus-penalty algorithm, and thereby relieves us from unnecessary complexities in the convergence time analysis.
\begin{assumption}\label{assumption-3}
($\xi$-slackness)
There exists a \textit{randomized stationary policy} $\mathbf{z}^{(\xi)}[t]$ such that all constraints are satisfied with $\xi>0$ slackness, i.e.
\[\expect{z_l^{(\xi)}[t]}\leq-\xi,~~\forall l\in\{1,2,\ldots,L\}.\]
\end{assumption}

The sets $\mathcal{A}(w[t])$ are not required to have any additional structure beyond these assumptions. In particular, the sets $\mathcal{A}(w[t])$ might be finite, infinite, convex, or nonconvex.

\subsection{Interpretation of drift-plus-penalty}\label{interpretation}
We define the squared norm of the virtual queue vector as
\[\|\mathbf{Q}[t]\|^2=\sum_{l=1}^LQ_l[t]^2.\]
Define the drift of the virtual queue vector as follows:
\[\Delta[t]=\frac12\left(\|\mathbf{Q}[t+1]\|^2-\|\mathbf{Q}[t]\|^2\right).\]
The drift-plus-penalty algorithm observes the vector $\mathbf{Q}[t]$ and random event $w[t]$ at every slot $t$, and then makes a decision $\mathbf{z}[t]\in\mathcal{A}(w[t])$ to greedily minimize an upper bound on the \emph{drift-plus-penalty} expression
\[\expect{\Delta[t]+Vz_0[t]~|~\mathbf{Q}[t],w[t]}.\]
To bound $\Delta[t]$, for any $l\in\{1,2,\ldots,L\}$, we square \eqref{queue_update} from both sides and use the fact that $\max\{z,0\}^2\leq z^2$ to obtain:
\[Q_l[t+1]^2\leq Q_l[t]^2+z_l[t]^2+2Q_l[t]z_l[t].\]
According to Assumption \ref{assumption-1},
\begin{equation}\label{pre_dpp_upper_bound}
\Delta[t]
\leq \frac {B^2}{2}+\sum_{l=1}^LQ_l[t]z_l[t].
\end{equation}
Thus, adding $Vz_0[t]$ from both sides and taking the conditional expectation gives,
\begin{align}\label{dpp_upper_bound}
&\expect{\Delta[t]+Vz_0[t]~|~\mathbf{Q}[t],w[t]}\nonumber\\
\leq& \frac {B^2}{2}+\expect{\left.\sum_{l=1}^LQ_l[t]z_l[t]+Vz_0[t]~\right|~\mathbf{Q}[t],w[t]}\nonumber\\
=& \frac {B^2}{2}+\sum_{l=1}^LQ_l[t]z_l[t]+Vz_0[t],
\end{align}
where the equality comes from the fact that given $\mathbf{Q}[t]$ and $w[t]$, the term $\sum_{l=1}^LQ_l[t]z_l[t]+Vz_0[t]$ is a constant. Thus, as we have already seen in section \ref{introduction-algorithm}, the drift-plus-penalty algorithm observes the vector $\mathbf{Q}[t]$ and random event $w[t]$ at slot $t$, and minimizes the right hand side of \eqref{dpp_upper_bound}.

\subsection{Optimization over randomized stationary algorithms}
A key feature of the drift-plus-penalty algorithm is that it is performed without knowing the probability distribution of the random events. Suppose for the moment that the controller does know the probability distribution of the random events. Then, consider the following class of \emph{randomized stationary algorithms}:
At the beginning of each time slot $t$, after observing the random event $w[t]$, the controller selects a decision vector $\mathbf{z}^*[t]\in\mathcal{A}(w[t])$ according to some probability distribution which depends only on $w[t]$.


\textbf{The following lemma shows that the optimal solution to \eqref{obj-problem1}-\eqref{constraint-2-problem1} is achievable over the closure of all one-shot expectations of $\mathbf{z}^*[t]$:
\begin{theorem}[Lemma 4.6 of \cite{Neely2010}]\label{theorem-stat-opt}
Let $z^{opt}$ be the minimum achieved by \eqref{obj-problem1}-\eqref{constraint-2-problem1},
Let $\mathcal{P}$ be the subset of $\mathbb{R}^{L+1}$ consisting of one-shot expectations $\expect{\mathbf{z}^*[t]}$ achieved by all randomized stationary algorithms. Then, there exists a vector $\mathbf{z}^*\in\overline{\mathcal{P}}$, the closure of $\mathcal{P}$, such that
\begin{align}
z_0^*&=z^{opt}\label{iid-obj}\\
z_l^*&\leq0,~\forall l\in\{1,2,\ldots,L\}, \label{iid-constraint}
\end{align}
i.e., the optimality is achievable within $\overline{\mathcal{P}}$.
\end{theorem}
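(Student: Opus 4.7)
The plan is to realize $\mathbf{z}^*$ as a cluster point of time-averaged one-shot expectations produced by an optimal history-dependent policy, and then read off from the limit both the optimal cost and the slackness of all constraints. Concretely, I would fix a feasible policy $\pi$ that attains $z^{opt}$ almost surely, and, for each horizon $T$, set $\bar{\mathbf{z}}^{(T)}=\frac{1}{T}\sum_{t=1}^T\expect{\mathbf{z}[t]}$. The central observation is that already $\bar{\mathbf{z}}^{(T)}\in\mathcal{P}$: letting $\mu_w^{(t)}$ denote the conditional law of $\mathbf{z}[t]$ given $w[t]=w$ (averaged over all other history), the mixture kernel $\bar\mu_w=\frac{1}{T}\sum_{t=1}^T\mu_w^{(t)}$ defines a bona fide randomized stationary rule whose one-shot expectation equals $\bar{\mathbf{z}}^{(T)}$. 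By Assumption~\ref{assumption-1} the whole sequence lives in a fixed bounded subset of $\mathbb{R}^{L+1}$, and Bolzano--Weierstrass yields a subsequence $\bar{\mathbf{z}}^{(T_k)}\to\mathbf{z}^*\in\overline{\mathcal P}$.

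Second, I would verify that this limit has the right coordinates. Feasibility of $\pi$ gives $\limsup_T \frac{1}{T}\sum_{t=1}^T z_l[t]\le 0$ a.s.\ for each $l\ge 1$, and by optimality $\limsup_T\frac{1}{T}\sum_{t=1}^T z_0[t]=z^{opt}$ a.s. The uniform bounds in Assumption~\ref{assumption-1} legitimize reverse Fatou, so passing $\limsup$ through the expectation delivers $\limsup_T \bar z_l^{(T)}\le 0$ and $\limsup_T \bar z_0^{(T)}\le z^{opt}$; reading these along $T_k$ produces $z_l^*\le 0$ for every $l\ge 1$ and $z_0^*\le z^{opt}$. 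The reverse inequality $z_0^*\ge z^{opt}$ comes from the observation that any $\tilde{\mathbf z}\in\overline{\mathcal P}$ whose constraint coordinates are non-positive is attained (up to arbitrarily small error) by a randomized stationary policy, and the strong law of large numbers applied to that i.i.d.\ policy identifies the one-shot expectation with the sample-path time average, making $\tilde z_0$ an achievable value of $\overline z_0$; specialising to $\mathbf z^*$ forces $z_0^*=z^{opt}$.

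The step I expect to be the main obstacle is the rigorous identification $\bar{\mathbf z}^{(T)}\in\mathcal P$. One has to produce the conditional kernels $\mu_w^{(t)}$ as a jointly measurable family (a standard disintegration argument relying on Polishness of the ambient spaces plus Assumption~2 to keep supports inside the compact sets $\mathcal A(w)$), verify that the averaged kernel $\bar\mu_w$ remains a measurable probability-kernel-valued map, and confirm that $\int\!\int \mathbf z\,\bar\mu_w(d\mathbf z)\,p(dw)$ genuinely recovers $\bar{\mathbf z}^{(T)}$. Assumption~2 together with the uniform bounds of Assumption~\ref{assumption-1} handles both the measure-theoretic technicalities and the topology required to ensure that the closure $\overline{\mathcal P}$ actually captures the limit $\mathbf z^*$, and it also underwrites the reverse Fatou step in the second paragraph.
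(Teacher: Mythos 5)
First, a point of comparison: the paper never proves this statement itself — it is imported as Lemma 4.6 of \cite{Neely2010} — so there is no in-paper proof to measure you against. Your first paragraph (disintegrate the optimal policy's decisions by conditioning on $w[t]$, average the resulting kernels over $t=1,\dots,T$ to get a genuine $w$-only policy whose one-shot expectation equals $\frac1T\sum_{t=1}^T\expect{\mathbf z[t]}$, then use boundedness to extract a limit in $\overline{\mathcal P}$ and reverse Fatou to pass the almost-sure bounds into expectations) is the standard route and, modulo the measurability caveats you flag yourself (the paper only says $W$ is an arbitrary set), it correctly produces $\mathbf z^*\in\overline{\mathcal P}$ with $z_l^*\le 0$ for all $l$ and $z_0^*\le z^{opt}$.

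The genuine gap is in the reverse inequality $z_0^*\ge z^{opt}$. You claim that any $\tilde{\mathbf z}\in\overline{\mathcal P}$ with nonpositive constraint coordinates is ``attained up to arbitrarily small error'' by a randomized stationary policy and that the SLLN then makes $\tilde z_0$ an achievable value of $\overline z_0$, hence $\ge z^{opt}$. But the approximating stationary policy only guarantees $\expect{z_l^*[t]}\le \tilde z_l+\epsilon$, which can be strictly positive (e.g.\ when $\tilde z_l=0$), and by the very SLLN you invoke its sample paths then have $\overline z_l=\tilde z_l+\epsilon>0$ almost surely: it is \emph{not} a feasible policy for \eqref{obj-problem1}--\eqref{constraint-2-problem1}, so minimality of $z^{opt}$ gives no lower bound on its objective, and near-feasible values of $\overline z_0$ could a priori sit strictly below $z^{opt}$. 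This matters precisely because $\mathbf z^*$ is only known to lie in the closure of $\mathcal P$, not in $\mathcal P$ itself. The missing ingredient is Assumption~\ref{assumption-3}: mix the $\epsilon$-approximating stationary policy with the $\xi$-slack policy $\mathbf z^{(\xi)}[t]$ with probability $\theta=\epsilon/(\epsilon+\xi)$. The mixture is again a randomized stationary policy, its constraint expectations are at most $(1-\theta)\epsilon-\theta\xi\le 0$, so by the SLLN it is genuinely feasible, while its objective is at most $(1-\theta)(\tilde z_0+\epsilon)+\theta z_{\max}\rightarrow\tilde z_0$ as $\epsilon\rightarrow0$; this yields $\tilde z_0\ge z^{opt}$ and hence $z_0^*=z^{opt}$. (Worth noting: the downstream use of the theorem in proving \eqref{key-feature} only consumes the direction you did establish, $z_0^*\le z^{opt}$ and $z_l^*\le 0$; the equality asserted in the statement is exactly where the slackness-based patch is required.)
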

Note that $\overline{\mathcal{P}}$ cannot be explicitly constructed if the controller does not know the probability distribtion of $w[t]$. Thus, Theorem \ref{theorem-stat-opt} cannot be used to compute the optimal solution to \eqref{obj-problem1}-\eqref{constraint-2-problem1}. However, we can use it to prove important results as is shown in the following section.
}

\section{Convergence Time Analysis}
For the rest of the paper, we implicitly assume that all lemmas and theorems are built on Assumption \ref{assumption-1} to \ref{assumption-3} and \eqref{obj-problem1}-\eqref{constraint-2-problem1} is feasible.

\subsection{Construction of a supermartingale}
Define $\mathcal{F}_t$ as the system history up to slot $t$.\footnote{Formally, $\mathcal{F}_t$ is the sigma algebra generated by all random variables from slot 1 to slot $t$, which include $\{w[s]\}_{s=1}^t$, $\{\mathbf{z}[s]\}_{s=1}^t$ and $\{\mathbf{Q}[s]\}_{s=1}^t$.} 
The following lemma illustrates the key feature of the drift-plus-penalty algorithm.
\begin{lemma}
The following inequality holds regarding the drift-plus-penalty algorithm for any $t\in\{1,2,3,\ldots\}$:
\begin{equation}\label{key-feature}
\expect{\left.V(z_0[t]-z^{opt})+\sum_{l=1}^LQ_l[t]z_l[t]~\right|~\mathcal{F}_{t-1}}\leq0,
\end{equation}
\end{lemma}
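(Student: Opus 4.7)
The plan is to exploit the defining optimality property of the drift-plus-penalty step: at slot $t$, given $\mathcal{F}_{t-1}$ (which determines $\mathbf{Q}[t]$) and the observation $w[t]$, the chosen $\mathbf{z}[t]$ minimizes $Vz_0 + \sum_{l=1}^L Q_l[t] z_l$ over $\mathcal{A}(w[t])$. Consequently, for \emph{any} alternative feasible decision rule $\mathbf{z}^*[t]\in\mathcal{A}(w[t])$,
\[
V z_0[t] + \sum_{l=1}^L Q_l[t] z_l[t] \;\le\; V z_0^*[t] + \sum_{l=1}^L Q_l[t] z_l^*[t].
\]
I would then take conditional expectation of both sides given $\mathcal{F}_{t-1}$ and pick $\mathbf{z}^*[t]$ to be a randomized stationary policy that comes from Theorem~\ref{theorem-stat-opt}.

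Next I would exploit independence: because $\{w[s]\}$ is i.i.d., $w[t]$ is independent of $\mathcal{F}_{t-1}$, and since a randomized stationary decision depends only on $w[t]$ (and independent auxiliary randomness), $\mathbf{z}^*[t]$ is independent of $\mathcal{F}_{t-1}$ as well. On the other hand, $\mathbf{Q}[t]$ is $\mathcal{F}_{t-1}$-measurable. Therefore, on the right-hand side,
\[
\expect{\left.V z_0^*[t] + \sum_{l=1}^L Q_l[t] z_l^*[t]\;\right|\;\mathcal{F}_{t-1}} = V\,\expect{z_0^*[t]} + \sum_{l=1}^L Q_l[t]\,\expect{z_l^*[t]}.
\]
Now I would invoke Theorem~\ref{theorem-stat-opt}: there exists $\mathbf{z}^*\in\overline{\mathcal{P}}$ with $z_0^* = z^{opt}$ and $z_l^*\le 0$ for every $l$. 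Using $Q_l[t]\ge 0$, the sum $\sum_{l=1}^L Q_l[t]\,z_l^* \le 0$, so the right-hand side is at most $V z^{opt}$. Rearranging yields exactly \eqref{key-feature}.

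The only subtlety I anticipate is that the optimal expectation vector is guaranteed only in the closure $\overline{\mathcal{P}}$, not in $\mathcal{P}$ itself, so there may be no single randomized stationary policy whose one-shot expectation equals $(z^{opt}, z_1^*,\dots,z_L^*)$ exactly. I would handle this by taking a sequence $\{\mathbf{z}^{(n)}\}\subset\mathcal{P}$ of randomized stationary policies whose one-shot expectations converge to this optimal vector, applying the above inequality to each (which is valid because each $\mathbf{z}^{(n)}[t]$ is an admissible alternative decision in $\mathcal{A}(w[t])$), and then passing to the limit. The limit exchange is justified by Assumption~\ref{assumption-1}, which gives a uniform bound on $|z_0^{(n)}[t]|$ and $\|(z_1^{(n)}[t],\dots,z_L^{(n)}[t])\|$, so dominated convergence applies to the finite sum $V\,\expect{z_0^{(n)}[t]} + \sum_{l=1}^L Q_l[t]\,\expect{z_l^{(n)}[t]}$. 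This is the only step that requires any care; everything else is a one-line consequence of the greedy minimization property of the algorithm.
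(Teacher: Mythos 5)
Your proposal is correct and follows essentially the same route as the paper's proof: compare the greedy minimization against an arbitrary randomized stationary policy, take conditional expectations using the $\mathcal{F}_{t-1}$-measurability of $\mathbf{Q}[t]$ and the independence of $\mathbf{z}^*[t]$ from the past, and then invoke Theorem~\ref{theorem-stat-opt} together with $Q_l[t]\geq 0$. Your explicit limiting argument for the closure $\overline{\mathcal{P}}$ is a fine (indeed slightly more careful) handling of a point the paper passes over implicitly, though since the bound is a linear function of the one-shot expectation vector, simple continuity suffices and dominated convergence is not really needed.
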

\begin{proof}
Since the drift-plus-penalty algorithm minimizes the term on the right hand side of \eqref{dpp_upper_bound} over all possible decisions at time $t$, it must achieve a smaller value on that term compared to that of any randomized stationary algorithm $\mathbf{z}^*[t]$, Formally, this is
\[\sum_{l=1}^LQ_l[t]z_l[t]+Vz_0[t]
\leq\sum_{l=1}^LQ[t]z_l^*[t]+Vz_0^*[t].\]
Since for any $l\in\{1,2,\ldots,L\}$, 
$$Q_l[t]=\max\{Q_l[t-1]+z_l[t-1],0\}\in\mathcal{F}_{t-1},$$
taking expectations from both sides regarding $w[t]$ gives
\textbf{
\begin{align*}
&\expect{\left.\sum_{l=1}^LQ_l[t]z_l[t]+Vz_0[t]~\right|~\mathcal{F}_{t-1}}\\
\leq&\expect{\left.\sum_{l=1}^LQ_l[t]z_l^*[t]+Vz_0^*[t]~\right|~\mathcal{F}_{t-1}}\\
=&\sum_{l=1}^LQ_l[t]\expect{z_l^*[t]}+V\expect{z_0^*[t]},
\end{align*}
where the last equality follows from the fact that the randomized stationary algorithm chooses $\mathbf{z}^*[t]$ based only on $w[t]$ and thus independent of the virtual queues at time $t$. Since $\expect{\mathbf{z}^*[t]}\in\overline{\mathcal{P}}$, and above inequality holds for any randomized stationary algorithm, it follows
\begin{align*}
\expect{\left.\sum_{l=1}^LQ_l[t]z_l[t]+Vz_0[t]~\right|~\mathcal{F}_{t-1}}
\leq Vz^*+\sum_{l=1}^LQ_l[t]z^*_l,
\end{align*}
which implies the claim combining with Theorem \ref{theorem-stat-opt}.
}
\end{proof}

With the help of the above lemma, we construct a supermartingale as follows,
\begin{lemma}\label{supMG}
Define a process $\{X[t]\}_{t=0}^\infty$ such that $X[0]=0$ and
\[X[t]=\sum_{i=1}^t\left(V(z_0[i]-z^{opt})+\sum_{l=1}^LQ_l[i]z_l[i]\right).\]
Then, $\{X[t]\}_{t=0}^\infty$ is a supermartingale.
\end{lemma}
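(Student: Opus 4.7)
The plan is to verify the three defining conditions of a supermartingale directly, with the bulk of the work delegated to the preceding lemma \eqref{key-feature}.

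First, I would note that $X[t]$ is adapted to $\{\mathcal{F}_t\}$: by the footnote definition, $\mathcal{F}_t$ contains $\mathbf{z}[1],\ldots,\mathbf{z}[t]$ and $\mathbf{Q}[1],\ldots,\mathbf{Q}[t]$, so each summand $V(z_0[i]-z^{opt})+\sum_{l=1}^L Q_l[i]z_l[i]$ is $\mathcal{F}_i$-measurable, and therefore $X[t]$ is $\mathcal{F}_t$-measurable. For integrability, I would combine Assumption \ref{assumption-1} with the queue update \eqref{queue_update}: since $Q_l[1]=0$ and $Q_l[i+1]\leq Q_l[i]+|z_l[i]|$, we get $\|\mathbf{Q}[i]\|\leq \sum_{s=1}^{i-1}\|\mathbf{z}_{1:L}[s]\|\leq (i-1)B$. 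By Cauchy--Schwarz, $\left|\sum_{l=1}^L Q_l[i]z_l[i]\right|\leq \|\mathbf{Q}[i]\|\cdot B\leq (i-1)B^2$, and $|V(z_0[i]-z^{opt})|\leq 2Vz_{\max}$. Hence $|X[t]|$ is bounded by a deterministic constant depending on $t$, guaranteeing $\expect{|X[t]|}<\infty$.

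Next, I would compute the conditional increment. Writing
\[
X[t+1]-X[t]=V(z_0[t+1]-z^{opt})+\sum_{l=1}^L Q_l[t+1]z_l[t+1],
\]
taking $\expect{\cdot\mid\mathcal{F}_t}$ and applying the preceding lemma at time index $t+1$ (with filtration $\mathcal{F}_{(t+1)-1}=\mathcal{F}_t$) yields
\[
\expect{X[t+1]-X[t]\mid\mathcal{F}_t}\leq 0,
\]
which is exactly the supermartingale property $\expect{X[t+1]\mid\mathcal{F}_t}\leq X[t]$ (using that $X[t]$ is $\mathcal{F}_t$-measurable and can be pulled out of the conditional expectation).

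There is no serious obstacle here; this lemma is essentially a repackaging of \eqref{key-feature} as a telescoping sum. The only thing one must be careful about is the index alignment between the filtration in \eqref{key-feature} (which conditions on $\mathcal{F}_{t-1}$ but evaluates the decision at time $t$) and the supermartingale definition (which conditions on $\mathcal{F}_t$ and evaluates the increment at time $t+1$); these are consistent under a single shift of the time index.
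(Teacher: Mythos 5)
Your proof is correct and follows essentially the same route as the paper: both reduce the supermartingale property to the key inequality \eqref{key-feature} applied to the one-step increment, with the adaptedness and integrability of $X[t]$ (which the paper dismisses as obvious and you verify explicitly via the queue growth bound and Cauchy--Schwarz) handled up front. Your index-shift remark and the explicit $\expect{|X[t]|}<\infty$ bound are just more detailed versions of the paper's argument, not a different approach.
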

\begin{proof}
First, it is obvious that $|X[t]|<\infty$ and $X[t]\in\mathcal{F}_t$ for any $t\geq0$.
Then, by \eqref{key-feature}, the following holds for any $t\geq1$:
\begin{align*}
&\expect{X[t]|\mathcal{F}_{t-1}}\\
=&\expect{\left.V(z_0[t]-z^{opt})+\sum_{l=1}^LQ_l[t]z_l[t]\right|\mathcal{F}_{t-1}}+X[t-1]\\
\leq& X[t-1].
\end{align*}
Thus, $\{X[t]\}_{t=0}^\infty$ is a supermartingale.
\end{proof}

\subsection{Truncation by a stopping time}
Although the process $\{X[t]\}_{t=0}^\infty$ is a supermartingale, its difference sequence $V(z_0[t]-z^{opt})+\sum_{l=1}^LQ_l[t]z_l[t]$ is potentially unbounded because the virtual queue process $\{Q_l[t]\}_{t=1}^\infty$ is not bounded. On the other hand, the bounded difference property is crucial for any well established concentration result to work (see \cite{old&new} for details). The way to circumvent this problem is to use truncation. Intuitively, we can ``stop'' the process whenever the difference gets too large and this stopped process then satisfies the bounded difference property.
The idea of truncation has been used under different scenarios (see \cite{Durrett} for sequential analysis and \cite{Hajek} for queue stability analysis). For basic definitions and lemmas related to stopping time and supermatringale, see Appendix \ref{app:basics}.
For the rest of the paper, define $a\wedge b\triangleq\min\{a,b\}$.

The following lemma introduces a truncated process $\{Y[t]\}_{t=0}^{\infty}$ which has some desired properties.
\begin{lemma}\label{truncated_supMG}
For any $c_1>0$, define
\[\tau\triangleq\inf\{t>0:\|\mathbf{Q}[t]\|>c_1\}.\]
Meanwhile, for any $t\geq0$, define
$$Y[t]=X[t\wedge(\tau-1)].$$
Then, $Y[t]$ has the following two properties:\\
1.The process $\{Y[t]\}_{t=0}^\infty$ is a supermartingale.\\
2.The process $\{Y[t]\}_{t=0}^\infty$ has bounded one-step differences,
        \[\left|Y[t+1]-Y[t]\right|\leq c_2,~\forall t\geq0,\]
where $c_2=2Vz_{\max}+Bc_1$.
\end{lemma}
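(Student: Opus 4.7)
The plan is to verify the two properties separately. For the supermartingale claim, I would reduce to a standard optional-sampling argument: if I can show that $\tau - 1$ is a stopping time with respect to $\{\mathcal{F}_t\}_{t\geq 0}$, then since $\{X[t]\}$ is already a supermartingale by Lemma \ref{supMG}, the stopped process $Y[t] = X[t \wedge (\tau - 1)]$ inherits the supermartingale property by the standard result (cited from Appendix \ref{app:basics}) that stopping a supermartingale preserves it.

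To check $\tau - 1$ is a stopping time, I would exploit the queue recursion $\mathbf{Q}[s+1] = \max\{\mathbf{Q}[s] + \mathbf{z}[s], 0\}$, which shows that $\mathbf{Q}[s]$ is determined by data up through slot $s-1$, i.e.\ $\mathbf{Q}[s] \in \mathcal{F}_{s-1}$. Hence
\[
\{\tau - 1 \leq t\} = \{\tau \leq t+1\} = \bigcup_{s=1}^{t+1}\{\|\mathbf{Q}[s]\| > c_1\} \in \mathcal{F}_t,
\]
which is exactly the stopping-time condition.

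For the bounded-differences claim, I would split on whether the stopping rule has already triggered. If $t \geq \tau - 1$, then $(t+1) \wedge (\tau-1) = t \wedge (\tau-1) = \tau - 1$, so $Y[t+1] - Y[t] = 0$. If instead $t < \tau - 1$, then $(t+1)\wedge(\tau-1) = t+1$, so
\[
Y[t+1] - Y[t] = X[t+1] - X[t] = V(z_0[t+1] - z^{opt}) + \sum_{l=1}^L Q_l[t+1]\, z_l[t+1].
\]
Assumption \ref{assumption-1} gives $|z_0[t+1]| \leq z_{\max}$ and, since $z^{opt}$ is attained by some feasible $\mathbf{z}$, also $|z^{opt}| \leq z_{\max}$, so the first term is bounded in magnitude by $2V z_{\max}$. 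Cauchy--Schwarz together with Assumption \ref{assumption-1} gives $|\sum_l Q_l[t+1] z_l[t+1]| \leq \|\mathbf{Q}[t+1]\| \cdot \sqrt{\sum_l z_l[t+1]^2} \leq \|\mathbf{Q}[t+1]\| \cdot B$, and since $t+1 \leq \tau - 1$ we have $\|\mathbf{Q}[t+1]\| \leq c_1$ by the definition of $\tau$. Combining yields $|Y[t+1] - Y[t]| \leq 2V z_{\max} + B c_1 = c_2$.

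The only genuinely delicate point is the off-by-one in the stopping time: one must use $\tau - 1$ rather than $\tau$ precisely so that at each increment up to the stop, $\mathbf{Q}[t+1]$ is still controlled by $c_1$ (the virtual queue may overshoot $c_1$ at time $\tau$ itself). Handling this correctly is what makes the truncation produce a bounded-difference supermartingale suitable for applying concentration inequalities in the subsequent steps of the analysis.
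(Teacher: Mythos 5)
Your proposal is correct and follows essentially the same route as the paper: establish that $\tau-1$ is a stopping time via $\mathbf{Q}[s]\in\mathcal{F}_{s-1}$, invoke the stopped-supermartingale lemma from the appendix, and bound the one-step difference by $2Vz_{\max}+B\|\mathbf{Q}\|$ via Cauchy--Schwarz, using that the queue norm stays below $c_1$ strictly before $\tau$. The case split on whether the stop has triggered is just the paper's indicator-function argument written out, so there is nothing substantive to add.
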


\begin{proof}
\emph{Proof of Property 1:} In order to apply Theorem \ref{stopping_time} it is enough to show that $\tau-1$ is  a valid stopping time, i.e. $\{\tau-1=t\}\in\mathcal{F}_{t},~\forall t\geq0$. Indeed, since $Q_l[t+1]=\max\{Q_l[t]+z_l[t],0\}\in\mathcal{F}_{t},~\forall t\geq0$ and $\forall l\in\{1,2,\ldots,L\}$, it follows that
      \begin{align*}
      &\{\tau-1=t\}=\{\tau=t+1\}\\
      =&\{\|\mathbf{Q}[t+1]\|>c_1\}\cap\{\|\mathbf{Q}[i]\|\leq c_1,~\forall i\leq t\}\in\mathcal{F}_{t}.
      \end{align*}
Since $\{Y[t]\}_{t=0}^\infty$ is a supermartingale truncated by a stopping time, we apply Lemma \ref{stopping_time} in Appendix \ref{app:basics} to conclude that it is also a supermartingale.

\emph{Proof of Property 2:} The proof is provided in Appendix \ref{app_property_2}.
\end{proof}

The following lemma gives a concentration result for $\{X[t]\}_{t=0}^\infty$ which is a supermartingale with possibly unbounded differences. The result is proved by a union bound argument.
\begin{lemma}\label{supMG_ineq}
For the same sequence $\{X[t]\}_{t=0}^\infty$ defined in Lemma \ref{supMG}, fix a time period $T$ and define the ``bad event'' for each $t\in\{1,2,\ldots,T\}$ as follows:
\[\mathcal{B}_t\triangleq\{\|\mathbf{Q}[t]\|>c_1\}\]
for some $c_1>0$. Then, we have for any $\lambda>0$,
\[Pr\left(X[T]\geq\lambda\right)\leq\exp\left(-\frac{\lambda^2}{2Tc_2^2}\right)
+\sum_{t=1}^TPr\left(\mathcal{B}_t\right).\]
\end{lemma}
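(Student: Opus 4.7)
The plan is to exploit the truncated supermartingale $\{Y[t]\}$ from Lemma~\ref{truncated_supMG}: since $Y[t]$ has bounded one-step differences, a standard Azuma-Hoeffding-type inequality applies to $Y[T]$, and $Y[T]$ coincides with $X[T]$ whenever no bad event has occurred. A simple union bound then patches the two pieces together.

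First I would split the target event by conditioning on the complement of the union of bad events. Write
\begin{align*}
\Pr(X[T]\geq\lambda)
&=\Pr\!\left(X[T]\geq\lambda,\bigcap_{t=1}^T\mathcal{B}_t^c\right)
+\Pr\!\left(X[T]\geq\lambda,\bigcup_{t=1}^T\mathcal{B}_t\right)\\
&\leq\Pr\!\left(X[T]\geq\lambda,\bigcap_{t=1}^T\mathcal{B}_t^c\right)
+\sum_{t=1}^T\Pr(\mathcal{B}_t),
\end{align*}
where the second term comes from a standard union bound. The first term is where the truncated process does the work.

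Next I would observe that on the event $\bigcap_{t=1}^T\mathcal{B}_t^c$ we have $\|\mathbf{Q}[t]\|\leq c_1$ for every $t\in\{1,\ldots,T\}$, so by the definition of $\tau$ we get $\tau\geq T+1$, hence $T\wedge(\tau-1)=T$ and therefore $Y[T]=X[T\wedge(\tau-1)]=X[T]$. Consequently,
\[
\Pr\!\left(X[T]\geq\lambda,\bigcap_{t=1}^T\mathcal{B}_t^c\right)
\leq \Pr(Y[T]\geq\lambda).
\]
Now Lemma~\ref{truncated_supMG} tells us that $\{Y[t]\}$ is a supermartingale with $Y[0]=X[0]=0$ and $|Y[t+1]-Y[t]|\leq c_2$. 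Applying the Azuma-Hoeffding inequality for supermartingales with bounded differences (which is the standard concentration result referenced in the paragraph leading to Lemma~\ref{supMG_ineq}) yields
\[
\Pr(Y[T]\geq\lambda)\leq\exp\!\left(-\frac{\lambda^2}{2Tc_2^2}\right).
\]
Combining the two bounds gives the claim.

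I do not anticipate any real obstacle here: the delicate part of the argument has already been absorbed into Lemma~\ref{truncated_supMG} (verifying that $\tau-1$ is a stopping time and that the truncated differences are bounded by $c_2$). What remains is essentially bookkeeping, with the one small but crucial point being the identification $Y[T]=X[T]$ on the no-bad-event event, which is the reason the stopping time is defined with a strict inequality ``$>c_1$'' and the process is stopped at $\tau-1$ rather than at $\tau$.
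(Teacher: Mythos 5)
Your proposal is correct and follows essentially the same route as the paper's proof: both reduce the bound on $X[T]$ to the event $\{X[T]=Y[T]\}$ (you argue via the complement of the bad events, the paper via the contrapositive inclusion $\{X[T]\neq Y[T]\}\subseteq\cup_{t=1}^T\mathcal{B}_t$), then apply the supermartingale Azuma-type inequality of Lemma~\ref{azuma-inequality} to $Y[T]$ and a union bound over the $\mathcal{B}_t$. The identification $Y[T]=X[T]$ when $\|\mathbf{Q}[t]\|\leq c_1$ for all $t\leq T$ is exactly the point the paper's proof also hinges on, so there is nothing to add.
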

\begin{proof}
We first show that for any $t\geq1$, $\left\{X[t]\neq Y[t]\right\}\subseteq \cup_{i=1}^t\mathcal{B}_i$. The proof is simple. Any event $X[t]\neq Y[t]$ is equivalent to $X[t\wedge(\tau-1)]\neq X[t]$. Thus, it follows
$t\wedge(\tau-1)\neq t$. This implies $\tau-1<t$ and $\|\mathbf{Q}[t]\|$ must exceed $c_1$ within the first $t$ slots.
Thus, the event $\left\{X[t]\neq Y[t]\right\}$ must belong to $\cup_{i=1}^t\mathcal{B}_i$.

Now, we can bound the probability that $X[T]$ ever gets large using a union bound, i.e.
\begin{align*}
Pr\left(X[T]\geq\lambda\right)
=&Pr\left(X[T]=Y[T],~Y[T]\geq\lambda\right)\\
&+Pr\left(X[T]\neq Y[T],~X[T]\geq\lambda\right)\\
\leq& Pr\left(Y[T]\geq \lambda\right)+Pr(X[T]\neq Y[T])\\
\leq&\exp\left(-\frac{\lambda^2}{2Tc_2^2}\right)+Pr\left(\cup_{t=1}^T\mathcal{B}_t\right)\\
\leq&\exp\left(-\frac{\lambda^2}{2Tc_2^2}\right)+\sum_{t=1}^TPr(\mathcal{B}_t).
\end{align*}
where the second-to-last inequality uses $\left\{X[t]\neq Y[t]\right\}\subseteq\cup_{i=1}^t\mathcal{B}_i$ and Lemma \ref{azuma-inequality} in Appendix \ref{app:basics}.
\end{proof}

\subsection{Exponential tail bound of the virtual queue}
According to Lemma \ref{supMG_ineq}, it remains to show that the probability that the bad event occurs (i.e. $Pr(\mathcal{B}_t)$) is small.
\textbf{The following preliminary lemma comes from the $\xi$-slackness assumption which states that $\|\mathbf{Q}[t]\|$ is not expected to be very large, which leads to a bound for $Pr(\mathcal{B}_t)$ shown in Cororllary \ref{geo_queue_bound}.}

\textbf{
Note that the key intuition here is the queue length has a exponential tail bound under the well-known max-weight algorithm (See also Lemma 3 in \cite{ES12}). Our drift-plus-penalty algorithm builds upon max-weight algorithm and thus also has exponential tail bound on the queues (Lemma 5 and Corollary 1 below). Thus, intuitively, we are not ``losing too much" if the queue is truncated at some appropriate level. This drives our truncation technique on the constructed supermartingales. 
}

\begin{lemma}\label{geometric_bound}
The following holds for any $t\in\mathbb{N}^+$ under the drift-plus-penalty algorithm,
\begin{align*}
\left|\|\mathbf{Q}[t+1]\|-\|\mathbf{Q}[t]\|\right|&\leq B,\\
\expect{\|\mathbf{Q}[t+1]\|-\|\mathbf{Q}[t]\||\mathbf{Q}[t]}&\leq\left\{
                                \begin{array}{ll}
                                  B, & \hbox{if $\|\mathbf{Q}[t]\|\leq C_0V$;} \\
                                  -\xi/2, & \hbox{if $\|\mathbf{Q}[t]\|> C_0V$.}
                                \end{array}
                              \right.
\end{align*}
where $C_0\triangleq (4z_{\max}+\frac{B^2}{V}-\frac{\xi^2}{4V})/\xi$, $B$, $z_{\max}$ are defined in Assumption \ref{assumption-1} and $\xi$ is defined in Assumption \ref{assumption-3}.
\end{lemma}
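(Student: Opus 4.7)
My plan is to handle the two bounds separately, with the first used for the small-queue regime of the second.

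For the one-step bound $|\|\mathbf{Q}[t+1]\|-\|\mathbf{Q}[t]\||\leq B$, I would inspect the queue update $Q_l[t+1]=\max\{Q_l[t]+z_l[t],0\}$ coordinatewise: if the max is inactive then $Q_l[t+1]-Q_l[t]=z_l[t]$; if it is active then $Q_l[t+1]-Q_l[t]=-Q_l[t]$, and since $Q_l[t]+z_l[t]<0$ with $Q_l[t]\geq 0$, this difference lies in $[z_l[t],0]$. Either way, $|Q_l[t+1]-Q_l[t]|\leq|z_l[t]|$. Summing squares and invoking Assumption~\ref{assumption-1} gives $\|\mathbf{Q}[t+1]-\mathbf{Q}[t]\|\leq B$, and the reverse triangle inequality delivers the almost-sure one-step bound. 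This bound alone already establishes the drift inequality in the regime $\|\mathbf{Q}[t]\|\leq C_0V$.

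For the nontrivial large-queue regime, I would combine the drift-plus-penalty optimality with the $\xi$-slackness policy $\mathbf{z}^{(\xi)}[t]$ of Assumption~\ref{assumption-3}. Since the algorithm minimizes $Vz_0[t]+\sum_l Q_l[t]z_l[t]$ over $\mathcal{A}(w[t])$, plugging in $\mathbf{z}^{(\xi)}[t]$ as a competitor and moving the two $Vz_0$ terms (each bounded by $z_{\max}$) to the other side gives the pointwise inequality $\sum_l Q_l[t]z_l[t]\leq 2Vz_{\max}+\sum_l Q_l[t]z_l^{(\xi)}[t]$. Taking $\expect{\,\cdot\mid\mathcal{F}_{t-1}}$ and using independence of $w[t]$ from $\mathcal{F}_{t-1}$, together with $\expect{z_l^{(\xi)}[t]}\leq-\xi$ and the elementary inequality $\sum_l Q_l[t]\geq\|\mathbf{Q}[t]\|$ valid for nonnegative coordinates, yields $\expect{\sum_l Q_l[t]z_l[t]\mid\mathcal{F}_{t-1}}\leq 2Vz_{\max}-\xi\|\mathbf{Q}[t]\|$. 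Substituting this into \eqref{pre_dpp_upper_bound} produces the Lyapunov-drift bound
\begin{equation*}
\expect{\tfrac12(\|\mathbf{Q}[t+1]\|^2-\|\mathbf{Q}[t]\|^2)\mid\mathcal{F}_{t-1}}\leq \tfrac{B^2}{2}+2Vz_{\max}-\xi\|\mathbf{Q}[t]\|.
\end{equation*}

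The final step, which I expect to be the main obstacle, is converting this squared-norm drift into a bound on the unsquared norm drift $\mu\triangleq\expect{\|\mathbf{Q}[t+1]\|-\|\mathbf{Q}[t]\|\mid\mathcal{F}_{t-1}}$. Writing $L[t]=\|\mathbf{Q}[t]\|$ and applying Jensen's inequality, $\expect{L[t+1]^2\mid\mathcal{F}_{t-1}}\geq(L[t]+\mu)^2$, which when combined with the drift bound above rearranges into a quadratic inequality in $\mu$:
\begin{equation*}
\mu^2+2L[t]\mu+2\xi L[t]-B^2-4Vz_{\max}\leq 0.
\end{equation*}
A direct algebraic check shows the larger root of this quadratic is at most $-\xi/2$ exactly when $\xi L[t]\geq 4Vz_{\max}+B^2-\xi^2/4$, which coincides with the threshold $L[t]>C_0V$ in the statement. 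The subtlety worth flagging is that retaining the $\mu^2$ term rather than discarding it as nonnegative is what yields the sharp $-\xi^2/(4V)$ refinement inside $C_0$; dropping it would give a slightly looser threshold.
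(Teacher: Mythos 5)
Your proposal is correct and follows essentially the same route as the paper: coordinatewise bounding of the queue update for the almost-sure bound, comparison of the drift-plus-penalty minimizer against the $\xi$-slackness policy plus $\sum_l Q_l[t]\ge\|\mathbf{Q}[t]\|$ to get the squared-norm drift bound, and Jensen's inequality to pass to the unsquared norm. The only difference is cosmetic: you phrase the last step as a quadratic inequality in $\mu$ and locate its larger root, while the paper completes the square to get $\left(\|\mathbf{Q}[t]\|-\tfrac{\xi}{2}\right)^2$ and takes square roots; the two computations are equivalent, and your remark that retaining the $\mu^2$ term is what produces the $-\xi^2/(4V)$ refinement in $C_0$ matches the paper's exact threshold.
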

\begin{proof}
First of all, by definition of $B$ and $\xi$, we have $B\geq\xi$ and $C_0$ is always positive.
According to Assumption \ref{assumption-1}, the increase (or decrease) of all queues are bounded during each slot, it follows for any $t$,
\begin{align*}
&\left|\|\mathbf{Q}[t+1]\|-\|\mathbf{Q}[t]\|\right|\\
&\leq\|\mathbf{Q}[t+1]-\mathbf{Q}[t]\|\\
&=\sqrt{\sum_{l=1}^L(\max\{Q_l[t]+z_l[t],0\}-Q_l[t])^2}\\
&\leq\sqrt{\sum_{l=1}^Lz_l[t]^2}\leq B,
\end{align*}
where the first inequality follows from triangle inequality and the second from the last inequality follows from $|\max\{a+b,0\}-a|\leq |b|,~\forall a,b\in\mathbb{R}$.

Next, suppose $\|\mathbf{Q}[t]\|> C_0V$. Then, since the drift-plus-penalty algorithm minimizes the term on the right hand side of \eqref{dpp_upper_bound} over all possible decisions at time $t$, it must achieve smaller value on that term compared to that of $\xi$-slackness policy $\mathbf{z}^{(\xi)}[t]$. Formally, this is
\begin{align*}
&\expect{\left.\sum_{l=1}^LQ_l[t]z_l[t]+Vz_0[t]~\right|~\mathbf{Q}[t],w[t]}\\
\leq&\expect{\left.\sum_{l=1}^LQ[t]z_l^{(\xi)}[t]+Vz_0^{(\xi)}[t]~\right|~\mathbf{Q}[t],w[t]}.
\end{align*}
Substitute this bound into the right hand side of \eqref{dpp_upper_bound} and take expectations from both sides to obtain
\begin{align*}
&\expect{\Delta[t]+Vz_0[t]~|~\mathbf{Q}[t]}\\
\leq&\frac{B^2}{2}+\expect{\left.\sum_{l=1}^LQ_l[t]z_l^{(\xi)}[t]+Vz_0^{(\xi)}[t]~\right|~\mathbf{Q}[t]}.
\end{align*}
This implies
\begin{align*}
&\expect{\|\mathbf{Q}[t+1]\|^2-\|\mathbf{Q}[t]\|^2~|~\mathbf{Q}[t]}\\
\leq&B^2+2\sum_{l=1}^LQ_l[t]\expect{\left.z_l^{(\xi)}[t]\right|\mathbf{Q}[t]}+4Vz_{\max}\\
\leq&B^2-2\xi\sum_{l=1}^LQ_l[t]+4Vz_{\max}\\
\leq&B^2-2\xi\|\mathbf{Q}[t]\|+4Vz_{\max},
\end{align*}
where the second inequality follows from the $\xi$-slackness property and the assumption that $z_l^{(\xi)}[t]$ is i.i.d. over slots and hence independent of $Q_l[t]$. This further implies
\begin{align*}
&\expect{\|\mathbf{Q}[t+1]\|^2~|~\mathbf{Q}[t]}\\
\leq&\|\mathbf{Q}[t]\|^2-2\xi\|\mathbf{Q}[t]\|+B^2+4Vz_{\max}\\
=&\|\mathbf{Q}[t]\|^2-2\xi\|\mathbf{Q}[t]\|+B^2+4Vz_{\max}-\frac{\xi^2}{4}+\frac{\xi^2}{4}\\
=&\|\mathbf{Q}[t]\|^2-2\xi\|\mathbf{Q}[t]\|+\frac{B^2+4Vz_{\max}-\frac{\xi^2}{4}}{\xi}\cdot\xi+\frac{\xi^2}{4}\\
=&\|\mathbf{Q}[t]\|^2-2\xi\|\mathbf{Q}[t]\|+C_0V\cdot\xi+\frac{\xi^2}{4}\\
\leq&\|\mathbf{Q}[t]\|^2-\xi\|\mathbf{Q}[t]\|+\frac{\xi^2}{4}=\left(\|\mathbf{Q}[t]\|-\frac\xi2\right)^2,
\end{align*}
where the first inequality follows from the definition of $C_0$ and the second inequality follows from the assumption $\|\mathbf{Q}[t]\|\geq C_0V$. Now take the square root from both sides to obtain
\[\sqrt{\expect{\|\mathbf{Q}[t+1]\|^2~|~\mathbf{Q}[t]}}\leq\|\mathbf{Q}[t]\|-\frac\xi2.\]
By concavity of the $\sqrt{x}$ function, we have $\expect{\left.\|\mathbf{Q}[t+1]\|~\right|~\mathbf{Q}[t]}\leq\sqrt{\expect{\|\mathbf{Q}[t+1]\|^2~|~\mathbf{Q}[t]}}$, thus,
\[\expect{\left.\|\mathbf{Q}[t+1]\|~\right|~\mathbf{Q}[t]}\leq\|\mathbf{Q}[t]\|-\frac\xi2,\]
finishing the proof.
\end{proof}

The following lemma gives us a bound on the moments whenever a random process satisfies the drift condition in Lemma \ref{geometric_bound}. Its proof is given in  \cite{energy-aware}.
\begin{lemma}\label{drift_lemma}
Let $K[n]$ be a real random process over $n\in \{1,2,\ldots\}$ satisfying
\begin{align*}
|K[n+1]-K[n]|&\leq\gamma\\
\expect{K[n+1]-K[n]~|~K[n]}&\leq\left\{
                                 \begin{array}{ll}
                                   \gamma, & \hbox{$K[n]<\sigma$;} \\
                                   -\beta, & \hbox{$K[n]\geq \sigma$.}
                                 \end{array}
                               \right.
\end{align*}
for some positive real-valued $\sigma$, and $0<\beta\leq \gamma$. Suppose $K[0]\in\mathbb{R}$ is finite. Then, at every $n\in\{1,2,\ldots\}$, the following holds:
\[\expect{e^{r K[n]}}\leq D+(e^{r K[1]}-D)\rho^n,\]
where $0<\rho<1$ and
\begin{align*}
r=\frac{\beta}{\gamma^2+\gamma\beta/3},~~
\rho=1-\frac{r\beta}{2},~~
D=\frac{(e^{r\gamma}-\rho)e^{r\sigma}}{1-\rho}.
\end{align*}
\end{lemma}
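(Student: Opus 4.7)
The plan is to establish the one-step contraction
\begin{equation*}
\expect{e^{rK[n+1]}~|~K[n]} \leq \rho\, e^{rK[n]} + (1-\rho)D,
\end{equation*}
and then iterate it as a scalar linear recursion. It is worth noting first that because $\beta\leq\gamma$, the prescribed $r$ satisfies $r\gamma \leq 1 < 3$, and $\rho=1-r\beta/2\in(0,1)$, so every expression below is well defined.

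For the one-step contraction I would use the truncated Taylor estimate $e^x \leq 1 + x + \frac{x^2/2}{1-|x|/3}$, which is valid for $|x|<3$. Taking $x = r(K[n+1]-K[n])$ and using $|K[n+1]-K[n]|\leq \gamma$ gives
\begin{equation*}
\expect{e^{r(K[n+1]-K[n])}~|~K[n]} \leq 1 + r\,\expect{K[n+1]-K[n]~|~K[n]} + \frac{r^2\gamma^2/2}{1-r\gamma/3}.
\end{equation*}
On the event $\{K[n]\geq\sigma\}$ the negative-drift hypothesis bounds the middle term by $-r\beta$, and the specific choice $r=\beta/(\gamma^2+\gamma\beta/3)$ is algebraically tailored to make the quadratic tail $\frac{r^2\gamma^2/2}{1-r\gamma/3}$ equal exactly $r\beta/2$, so that the whole right-hand side collapses to $\rho$. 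Multiplying by $e^{rK[n]}$ yields $\expect{e^{rK[n+1]}|K[n]}\leq \rho\, e^{rK[n]}$. On the complementary event $\{K[n]<\sigma\}$ I would skip the Taylor bound entirely and use the crude deterministic estimate $K[n+1]\leq K[n]+\gamma \leq \sigma + \gamma$, giving $e^{rK[n+1]}\leq e^{r\gamma}e^{rK[n]}$. Rewriting this as $\rho\, e^{rK[n]} + (e^{r\gamma}-\rho)e^{rK[n]}$ and using $e^{rK[n]}\leq e^{r\sigma}$ on this event bounds the excess term by $(e^{r\gamma}-\rho)e^{r\sigma}=(1-\rho)D$. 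Both cases then yield the same one-step contraction uniformly in $K[n]$.

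Taking total expectation and setting $a_n=\expect{e^{rK[n]}}$, the bound becomes $a_{n+1}-D\leq \rho(a_n-D)$, which telescopes to $a_n\leq D+(a_1-D)\rho^{n-1}$, delivering the claim up to an indexing shift in the exponent of $\rho$. The main obstacle is the algebraic identity in the negative-drift case: one must verify that $r$ is precisely the positive root of $r(\gamma^2+\gamma\beta/3)=\beta$, so that the quadratic noise term absorbs exactly half of the linear drift and leaves the clean contraction factor $\rho<1$. A secondary subtlety is confirming that the single constant $D$ works uniformly over both regimes, which is exactly why $D$ is built from the threshold $\sigma$ rather than from $K[n]$; the monotonicity $e^{rK[n]}\leq e^{r\sigma}$ on $\{K[n]<\sigma\}$ is what makes this consolidation possible.
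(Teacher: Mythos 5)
Your proof is correct and follows essentially the same argument as the proof the paper relies on (it cites \cite{energy-aware} rather than reproving the lemma): the Bernstein-type bound $e^x\leq 1+x+\frac{x^2/2}{1-|x|/3}$ for $|x|<3$, the verification that $r=\beta/(\gamma^2+\gamma\beta/3)$ makes the quadratic term equal $r\beta/2$ so the drift case contracts by $\rho$, the crude bound $e^{rK[n+1]}\leq e^{r\gamma}e^{r\sigma}$ below the threshold, and iteration of the resulting linear recursion. The $\rho^{n-1}$ versus $\rho^n$ discrepancy you flag stems from the lemma's own inconsistent indexing ($K[0]$ finite but conclusion stated via $K[1]$), not from any gap in your argument.
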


Using the above two lemmas, we have the following important corollary regarding the virtual queue vector.
\begin{corollary}\label{geo_queue_bound}
The following hold for any $t\in\{1,2,\ldots\}$ under the drift-plus-penalty algorithm,\\
1. Bounded moments of virtual queues:
\begin{equation}
\expect{e^{r\|\mathbf{Q}[t]\|}}\leq D,
\end{equation}
where
$$r=\frac{3\xi}{6B^2+B\xi},~D=\frac{(4e^{rB}+r\xi-4)e^{rC_0V}}{r\xi},$$
$B$ is defined in Assumption \ref{assumption-1}, $\xi$ is defined in \ref{assumption-3} and $C_0$ is defined in Lemma \ref{geometric_bound}. \\
2. Exponential tail bound: For any $c_1>0$,
\begin{equation}\label{high_prob_bound_queue}
Pr\left(\|\mathbf{Q}[t]\|>c_1\right)\leq De^{-rc_1}.
\end{equation}
\textbf{
3. Asymptotic feasibility: For any $l\in\{1,2,\cdots,L\}$
\begin{equation}
\limsup_{T\rightarrow\infty}\frac1T\sum_{t=1}^{T-1}z_l[t]\leq 0,~~w.p.1.
\end{equation}}
\end{corollary}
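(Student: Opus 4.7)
The plan is to obtain parts 1, 2, and 3 in a cascade, with essentially all of the work concentrated in part 1, which is a direct application of Lemma \ref{drift_lemma} to the scalar process $K[n]=\|\mathbf{Q}[n]\|$ using the two drift inequalities just proved in Lemma \ref{geometric_bound}. Concretely, I would match parameters by setting $\gamma=B$, $\beta=\xi/2$, and $\sigma=C_0V$; plugging into the formulas of Lemma \ref{drift_lemma} then produces exactly $r=3\xi/(6B^2+B\xi)$, $\rho=1-r\xi/4$, and $D=(4e^{rB}+r\xi-4)e^{rC_0V}/(r\xi)$. Since $\mathbf{Q}[1]=\mathbf{0}$ by construction, $e^{rK[1]}=1$, and a short check shows $D\geq e^{rC_0V}\geq 1$ (use $e^{rB}\geq 1$ to get $4e^{rB}-4\geq 0$). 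Therefore $(1-D)\rho^n\leq 0$ for every $n$, and Lemma \ref{drift_lemma} collapses to the uniform bound $\expect{e^{r\|\mathbf{Q}[t]\|}}\leq D$, proving part 1.

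Part 2 is then a one-line Chernoff/Markov estimate from part 1:
\[Pr(\|\mathbf{Q}[t]\|>c_1)=Pr(e^{r\|\mathbf{Q}[t]\|}>e^{rc_1})\leq e^{-rc_1}\expect{e^{r\|\mathbf{Q}[t]\|}}\leq De^{-rc_1}.\]

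For part 3, I would exploit the elementary telescoping inequality that drops out of the queue update $Q_l[t+1]=\max\{Q_l[t]+z_l[t],0\}\geq Q_l[t]+z_l[t]$. Summing and using $Q_l[1]=0$ yields the deterministic bound
\[\sum_{t=1}^{T-1}z_l[t]\leq Q_l[T]-Q_l[1]=Q_l[T]\leq \|\mathbf{Q}[T]\|,\]
so it suffices to establish $\|\mathbf{Q}[T]\|/T\to 0$ almost surely. For any fixed $\varepsilon>0$, applying part 2 with $c_1=\varepsilon T$ gives $Pr(\|\mathbf{Q}[T]\|/T>\varepsilon)\leq De^{-r\varepsilon T}$, whose sum in $T$ is finite. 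Borel--Cantelli then implies that $\|\mathbf{Q}[T]\|/T>\varepsilon$ occurs only finitely often almost surely. Intersecting the probability-one events corresponding to $\varepsilon=1/k$ over $k\in\mathbb{N}^+$ produces a single probability-one event on which $\limsup_{T\to\infty}\|\mathbf{Q}[T]\|/T=0$, which is part 3.

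The main obstacle is essentially administrative rather than conceptual: one has to verify the two hypotheses of Lemma \ref{drift_lemma} against the bounds of Lemma \ref{geometric_bound} (with the right identification of constants) and check that $D\geq 1$ so that the transient term vanishes uniformly in $t$. Once this is done, parts 2 and 3 follow without any further probabilistic machinery beyond Markov's inequality and Borel--Cantelli.
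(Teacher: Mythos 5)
Your proposal is correct and follows essentially the same route as the paper: part 1 by applying Lemma \ref{drift_lemma} to $\|\mathbf{Q}[t]\|$ with $\gamma=B$, $\beta=\xi/2$, $\sigma=C_0V$, part 2 by Markov's inequality, and part 3 by taking $c_1=\varepsilon T$, Borel--Cantelli, and the telescoped queue inequality $\sum_{t=1}^{T-1}z_l[t]\leq Q_l[T]$. The only differences are cosmetic: you verify explicitly that the transient term $(e^{rK[1]}-D)\rho^n$ is nonpositive (via $Q[1]=0$ and $D\geq1$) and you make the limit $\varepsilon\to0$ rigorous by intersecting the events for $\varepsilon=1/k$, both of which the paper leaves implicit.
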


\begin{proof}
The first part follows directly from Lemma \ref{geometric_bound} and Lemma \ref{drift_lemma} by plugging in $\gamma=B$ and $\beta=\xi/2$ in Lemma \ref{drift_lemma}. The second part follows from
\begin{align*}
Pr\left(\|\mathbf{Q}[t]\|>c_1\right)
&=Pr\left(e^{r\|\mathbf{Q}[t]\|}>e^{rc_1}\right)\\
&\leq \frac{\expect{e^{r\|\mathbf{Q}[t]\|}}}{e^{rc_1}}\leq De^{-rc_1}.
\end{align*}
which is a direct application of Markov inequality. \textbf{For the third part of the claim, taking $c_1=\varepsilon T$ and obtain
\begin{align*}
Pr(Q_l[T]>\varepsilon T)\leq De^{-r\varepsilon T}.
\end{align*}
Thus, we have
\begin{align*}
\sum_{T=1}^{\infty}Pr(Q_l[T]>\varepsilon T)\leq D\sum_{T=1}^{\infty}e^{-r\varepsilon T}<+\infty.
\end{align*}
Thus, by the Borel-Cantelli lemma,
\[Pr\left(Q_l[T]>\varepsilon T~\textrm{for infinitely many}~T\right)=0.\]
Since $\varepsilon>0$ is arbitrary, letting $\varepsilon\rightarrow0$ gives
\[Pr\left(\lim_{T\rightarrow\infty}\frac{Q_l[T]}{T}=0\right)=1.\]
On the other hand, by queue updating rule $Q_l[T]\geq Q_l[1] +\sum_{t=1}^{T-1}z_l[t]=\sum_{t=1}^{T-1}z_l[t]$, and thus, the claim follows.}
\end{proof}

\subsection{Convergence time bound}
The following is our main lemma on the performance of drift-plus-penalty algorithm.
\begin{lemma}\label{main_lemma}
Under the proposed drift-plus-penalty algorithm, for any $\delta\in(0,1)$, and any $T\in\mathbb{N}$,
\begin{align*}
&Pr\left(\frac{1}{T}\sum_{t=1}^T\left(V(z_0[t]-z^{opt})+\sum_{l=1}^LQ_l[t]z_l[t]\right)\right.\\
&\left.\leq CV\frac{\max\left\{\log T\log^{1/2}\frac2\delta,\log^{3/2}\frac2\delta\right\}}{\sqrt{T}}\right)\geq 1-\delta,
\end{align*}
where
$C=2\sqrt{2}(2z_{\max}+\frac{B}{rV}+\frac{B}{rV}\log\left(\frac{8e^{rB}+2r\xi-8}{r\xi}\right)+BC_0)$, $C_0$, $r$ are defined in Lemma \ref{geometric_bound} and Corollary \ref{geo_queue_bound}, respectively, $B$, $z_{\max}$ are defined in Assumption \ref{assumption-1}, and $\xi$ is defined in  Assumption \ref{assumption-3}.
\end{lemma}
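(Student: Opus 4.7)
The plan is to combine the concentration inequality for the truncated supermartingale (Lemma \ref{supMG_ineq}) with the exponential tail bound of the virtual queue norm (Corollary \ref{geo_queue_bound}), then optimize the truncation level $c_1$ to split the failure probability evenly between the two tails. Observe first that the sum inside the probability is exactly $X[T]$ from Lemma \ref{supMG}, so I only need a high-probability upper bound on $X[T]/T$.

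Next I would invoke Lemma \ref{supMG_ineq} to get
\[
\Pr(X[T]\geq \lambda) \leq \exp\!\bigl(-\tfrac{\lambda^2}{2Tc_2^2}\bigr)+\sum_{t=1}^T \Pr(\|\mathbf{Q}[t]\|>c_1),
\]
and control each $\Pr(\|\mathbf{Q}[t]\|>c_1)$ by $De^{-rc_1}$ using Corollary \ref{geo_queue_bound}, yielding
\[
\Pr(X[T]\geq \lambda) \leq \exp\!\bigl(-\tfrac{\lambda^2}{2Tc_2^2}\bigr)+T D e^{-rc_1}.
\]
Then I choose $c_1=\tfrac{1}{r}\log\tfrac{2TD}{\delta}$ so that the second term is $\leq\delta/2$, and $\lambda=c_2\sqrt{2T\log(2/\delta)}$ so that the first term is $\leq\delta/2$. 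With these choices, $\Pr(X[T]\geq \lambda)\leq \delta$, so with probability at least $1-\delta$ we have $X[T]/T \leq c_2\sqrt{2\log(2/\delta)/T}$.

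The remaining work is bookkeeping. Substituting $c_1$ into $c_2=2Vz_{\max}+Bc_1$ and expanding $\log D=\log\tfrac{4e^{rB}+r\xi-4}{r\xi}+rC_0V$ gives
\[
c_2 = 2Vz_{\max}+BC_0V+\tfrac{B}{r}\bigl(\log T+\log\tfrac{4e^{rB}+r\xi-4}{r\xi}+\log\tfrac{2}{\delta}\bigr).
\]
Multiplying by $\sqrt{2\log(2/\delta)/T}$ produces four types of contributions: a $V\sqrt{\log(2/\delta)}/\sqrt{T}$ term from $2Vz_{\max}+BC_0V$, a $\log T\sqrt{\log(2/\delta)}/\sqrt{T}$ term, a $\log^{3/2}(2/\delta)/\sqrt{T}$ term, and a constant times $\sqrt{\log(2/\delta)}/\sqrt{T}$ from the $\log\tfrac{4e^{rB}+r\xi-4}{r\xi}$ contribution. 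Using $\sqrt{\log(2/\delta)}\leq \log^{3/2}(2/\delta)$ for $\delta\leq 2/e$, each type is dominated by $\max\{\log T\log^{1/2}(2/\delta),\log^{3/2}(2/\delta)\}/\sqrt{T}$, and factoring $V$ outside recovers the constant $C$ stated in the lemma (after matching $\tfrac{B}{rV}\log 2$ with the remaining $\tfrac{B}{rV}$ term inside $C$ via the identity $\tfrac{8e^{rB}+2r\xi-8}{r\xi}=2\cdot\tfrac{4e^{rB}+r\xi-4}{r\xi}$).

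The main obstacle, as I see it, is not the concentration step itself but the careful parametrization of the truncation: the exponential tail constant $D$ depends on $V$ through the $rC_0V$ exponent, so the ``price'' of choosing a very high truncation threshold shows up as a $V$-linear component in $c_2$, not merely as a lower-order logarithmic overhead. One must verify that, after optimizing $c_1$, this $V$-linear piece still fits cleanly inside $CV$ and does not inflate the convergence rate beyond the claimed $\tfrac{1}{\sqrt{T}}$ scaling. The second subtlety is packaging the independent $\log T$ and $\log(2/\delta)$ contributions into the single $\max\{\cdot,\cdot\}$ appearing in the bound; the symmetric form of the $\max$ is what permits a clean statement that will later drive the convergence-time corollary in the subsequent subsection.
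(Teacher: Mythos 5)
Your proposal is correct and follows essentially the same route as the paper's proof: apply Lemma \ref{supMG_ineq} together with the tail bound of Corollary \ref{geo_queue_bound}, split the failure probability by setting $DTe^{-rc_1}=\delta/2$ (so $c_1=\frac1r\log\frac{2DT}{\delta}$) and $\exp(-\lambda^2/(2Tc_2^2))=\delta/2$, then expand $c_2=2Vz_{\max}+Bc_1$ and absorb the resulting $\log T$, $\log^{3/2}\frac2\delta$, and $V$-linear pieces (including the $rC_0V$ inside $\log D$) into $CV\sqrt{T}\max\{\log T\log^{1/2}\frac2\delta,\log^{3/2}\frac2\delta\}$. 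Your bookkeeping, including the identity $\frac{8e^{rB}+2r\xi-8}{r\xi}=2\cdot\frac{4e^{rB}+r\xi-4}{r\xi}$ matching the stated constant $C$, reproduces the paper's argument, and your explicit caveat $\log^{1/2}\frac2\delta\leq\log^{3/2}\frac2\delta$ is the same simple-algebra step the paper glosses over.
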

\begin{proof}
First of all, according to Corollary \ref{geo_queue_bound} and the definition of $\mathcal{B}_t$,
\begin{align*}
Pr(\mathcal{B}_t)\leq De^{-rc_1}.
\end{align*}
Thus,
\[\sum_{t=1}^TPr(\mathcal{B}_t)\leq DTe^{-rc_1}.\]
Then by Lemma \ref{supMG_ineq}, we have
\begin{equation}\label{interim}
Pr(X[T]\geq\lambda)\leq\exp\left(-\frac{\lambda^2}{2Tc_2^2}\right)+DTe^{-rc_1}.
\end{equation}
For any $\delta\in(0,1)$, set $DTe^{-rc_1}=\delta/2$, so that $c_1=\frac1r\log\frac{2DT}{\delta}$. Then, set
\[\exp\left(-\frac{\lambda^2}{2Tc_2^2}\right)=\delta/2,\]
which, by substituting the definition that $c_2=2Vz_{\max}+Bc_1$, implies
\begin{align*}
\lambda&=\sqrt{2T\log\frac2\delta}\left(2Vz_{\max}+\frac{B}{r}\log\frac{2DT}{\delta}\right)\\
=& \frac{\sqrt{2}B}{r}\sqrt{T}\left(\log T\log^{1/2}\frac2\delta
+\log^{3/2}\frac2\delta\right)\\
&+\sqrt{2}\left(2Vz_{\max}+\frac{B}{r}\log(2D)\right)\sqrt{T}\log^{1/2}\frac 2\delta\\
\leq& \frac{CV}{2}\sqrt{T}\left(\log T\log^{1/2}\frac2\delta
+\log^{3/2}\frac2\delta\right)\\
\leq&CV\sqrt{T}\max\left\{\log T\log^{1/2}\frac2\delta,\log^{3/2}\frac2\delta\right\},
\end{align*}
where the second equality follows from simple algebra, the first inequality follows by substituting the definition of $D$ in Corollary \ref{geo_queue_bound} and doing some simple algebra, and the final inequality follows from the fact that $a+b\leq2\max\{a,b\}$. Substitute this choice of $\lambda$ and the definition of $X[T]$ in Lemma \ref{supMG} into \eqref{interim} gives
\begin{align*}
&Pr\left(\frac{1}{T}\sum_{t=1}^T\left(V(z_0[t]-z^{opt})+\sum_{l=1}^LQ_l[t]z_l[t]\right)\right.\\
&\left.\geq\frac{CV\max\left\{\log T\log^{1/2}\frac2\delta,\log^{3/2}\frac2\delta\right\}}{\sqrt{T}}\right)\leq\delta,
\end{align*}
which implies the claim.
\end{proof}
With the help of Lemma \ref{main_lemma}, we have the following theorem,

\begin{theorem}\label{main-theorem}
Fix $\varepsilon>0$ and $\delta\in(0,1)$ and define $V=1/\varepsilon$. Then, for any $T\geq\frac{1}{\varepsilon^2}\max\left\{\log^2\frac1\varepsilon\log\frac2\delta,~\log^3\frac2\delta\right\}$, with probability at least $1-2\delta$, one has:
\begin{align}
\frac{1}{T}\sum_{t=1}^Tz_0[t]&\leq z^{opt}+\mathcal{O}(\varepsilon),\label{near-optimality}\\
\frac{1}{T}\sum_{t=1}^Tz_l[t]&\leq \mathcal{O}(\varepsilon),~\forall l\in\{1,2,\ldots,L\}, \label{constraint-violation}
\end{align}
and thus the drift-plus-penalty algorithm with parameter $V=1/\varepsilon$ provides an $\mathcal{O}(\varepsilon)$ approximation with a convergence time $\frac{1}{\varepsilon^2}\max\left\{\log^2\frac1\varepsilon\log\frac2\delta,~\log^3\frac2\delta\right\}$.
\end{theorem}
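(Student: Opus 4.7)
The plan is to combine Lemma \ref{main_lemma} (which controls the weighted sum of objective gap and queue$\times$penalty) with the exponential queue tail from Corollary \ref{geo_queue_bound}, via two separate high-probability events each of mass at least $1-\delta$, and then a union bound.

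First I would start from the inequality in Lemma \ref{main_lemma}, multiply through by $T$, and rearrange to isolate $V\sum_{t=1}^T(z_0[t]-z^{opt})$ on the left. To dispose of the inner term $\sum_{t=1}^T\sum_l Q_l[t]z_l[t]$, I would telescope the squared-queue recursion $Q_l[t+1]^2\le Q_l[t]^2+z_l[t]^2+2Q_l[t]z_l[t]$ and invoke Assumption \ref{assumption-1} ($\sum_l z_l[t]^2\le B^2$) together with $\mathbf{Q}[1]=0$, obtaining the lower bound $\sum_{t=1}^T\sum_l Q_l[t]z_l[t]\ge \tfrac12\|\mathbf{Q}[T+1]\|^2-\tfrac{TB^2}{2}$. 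Dropping the nonnegative $\|\mathbf{Q}[T+1]\|^2$ term, dividing by $TV$, and plugging $V=1/\varepsilon$ yields, on the event of Lemma \ref{main_lemma},
\begin{equation*}
\frac{1}{T}\sum_{t=1}^T z_0[t]-z^{opt}\le \frac{C\max\{\log T\log^{1/2}\tfrac{2}{\delta},\log^{3/2}\tfrac{2}{\delta}\}}{\sqrt{T}}+\frac{B^2\varepsilon}{2}.
\end{equation*}
A routine check against the choice of $T$ shows the first term is also $\mathcal{O}(\varepsilon)$: when $\log\tfrac{1}{\varepsilon}\ge\log\tfrac{2}{\delta}$ the max is dominated by $\log T\log^{1/2}\tfrac{2}{\delta}\sim \log\tfrac{1}{\varepsilon}\log^{1/2}\tfrac{2}{\delta}$, matched by $\sqrt{T}\ge \tfrac{1}{\varepsilon}\log\tfrac{1}{\varepsilon}\log^{1/2}\tfrac{2}{\delta}$; in the opposite regime the $\log^{3/2}\tfrac{2}{\delta}$ branch and the corresponding $T$ lower bound match analogously. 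This gives \eqref{near-optimality}.

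For the constraint bound \eqref{constraint-violation}, I would apply Corollary \ref{geo_queue_bound} part 2 at the single slot $T+1$ with $c_1=\tfrac{1}{r}\log\tfrac{D}{\delta}$, so that $\|\mathbf{Q}[T+1]\|\le c_1$ with probability at least $1-\delta$. Since the queue recursion gives $Q_l[T+1]\ge \sum_{t=1}^T z_l[t]$ for each $l$, we have $\tfrac{1}{T}\sum_{t=1}^T z_l[t]\le \|\mathbf{Q}[T+1]\|/T\le c_1/T$ on this event. Now $\log D=rC_0 V+\mathcal{O}(1)=\mathcal{O}(1/\varepsilon)$, so $c_1=\mathcal{O}(1/\varepsilon+\log(1/\delta))$, and the lower bound on $T$ makes $c_1/T=\mathcal{O}(\varepsilon)$: the $1/\varepsilon$ piece is absorbed by $T\ge 1/\varepsilon^2$, and the $\log(1/\delta)$ piece by $T\ge \tfrac{1}{\varepsilon^2}\log^3\tfrac{2}{\delta}$.

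Finally I would union-bound the two high-probability events (each of mass at least $1-\delta$) from Lemma \ref{main_lemma} and Corollary \ref{geo_queue_bound} to conclude \eqref{near-optimality}--\eqref{constraint-violation} hold simultaneously with probability at least $1-2\delta$. I do not foresee any real obstacle; the only delicate step is the case-check verifying that the prescribed $T$ simultaneously kills the $\log T\log^{1/2}(2/\delta)$ branch of the objective bound (which produces the $\log^2\tfrac{1}{\varepsilon}\log\tfrac{2}{\delta}$ factor) and the $\log^{3/2}(2/\delta)$ branch (which produces the $\log^3\tfrac{2}{\delta}$ factor), while also dominating the queue penalty $c_1/T$; this is the reason the $\max$ appears in the convergence-time expression.
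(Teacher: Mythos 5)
Your proposal is correct and follows essentially the same route as the paper's own proof: lower-bounding $\sum_{t}\sum_l Q_l[t]z_l[t]$ by the telescoped squared-queue (drift) sum to extract the objective bound from Lemma \ref{main_lemma}, bounding the constraint violation via $\frac1T\sum_t z_l[t]\le Q_l[T+1]/T$ together with the exponential tail in Corollary \ref{geo_queue_bound} with $c_1=\frac1r\log\frac{D}{\delta}$, and finishing with a union bound over the two $1-\delta$ events. The only part left as a "routine check" (that the prescribed $T$ makes the $\max\{\log T\log^{1/2}\frac2\delta,\log^{3/2}\frac2\delta\}/\sqrt{T}$ term $\mathcal{O}(\varepsilon)$) is exactly the case analysis the paper carries out explicitly in \eqref{single-column}, so no gap remains.
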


The proof is given in Appendix \ref{proof-theorem-2} by applying Lemma \ref{main_lemma} together with Corollary \ref{geo_queue_bound}. First, using the relation between $\Delta[t]$ and $\sum_{i=1}^LQ_l[t]z_l[t]$ in \eqref{pre_dpp_upper_bound} and the fact that $\sum_{t=1}^T\Delta[t]$ is a telescoping sum equal to $\frac12\|\mathbf{Q}[T+1]\|^2$, we can get rid of the term $\sum_{i=1}^LQ_l[t]z_l[t]$ in Lemma \ref{main_lemma} and prove the $\varepsilon$-suboptimality of the objective. Then, we use the exponential decay of the virtual queue vector in Corollary \ref{geo_queue_bound} to bound the constraint violation.

\section{Improved Convergence Time under One Constraint}
In this section, we show that when the problem \eqref{obj-problem1}-\eqref{constraint-2-problem1} has only one constraint (i.e. $L=1$), we can achieve the same $\varepsilon$ approximation as \eqref{near-optimality} and \eqref{constraint-violation} with probability at least $1-2\delta$ with a convergence time
$\frac{1}{\varepsilon^2}\log^2\frac2\delta$. Compared to the previous result, we have improved by a logarithm factor.
\subsection{A deterministic truncation}
In the previous section, we truncate the original supermartingale $\{X[t]\}_{t=1}^\infty$ using a stopping time. In this section, we show that if there is only one constraint, then, a deterministic truncation is enough to construct a new supermartingale with a bounded difference.

Again assume the $\xi$-slackness assumption holds. The translation of Lemma \ref{geometric_bound} to the case of one constraint implies that:
\begin{align}
\left|Q_1[t+1]-Q_1[t]\right|&\leq B,  \label{one-constraint-1}\\
\expect{Q_1[t+1]-Q_1[t]|Q_1[t]}&\leq\left\{
                                \begin{array}{ll}
                                  B, & \hbox{if $Q_1[t]\leq C_0V$;} \\
                                  -\xi/2, & \hbox{if $Q_1[t]> C_0V$,}
                                \end{array}
                              \right.  \label{one-constraint-2}
\end{align}

\begin{lemma}\label{deter-truncation}
If $L=1$ in problem \eqref{obj-problem1}-\eqref{constraint-2-problem1}, then, the following inequality holds regarding the drift-plus-penalty algorithm for any $t\in\{1,2,3,\ldots\}$ and $V\geq B/C_0$:
\begin{equation}
\expect{\left.V(z_0[t]-z^{opt})+(Q_1[t]\wedge C_0V)z_1[t]~\right|~\mathcal{F}_{t-1}}\leq0,\nonumber
\end{equation}
\end{lemma}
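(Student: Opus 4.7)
The plan is to split into two cases according to whether $Q_1[t] \leq C_0 V$ or $Q_1[t] > C_0 V$, since the truncation $Q_1[t] \wedge C_0 V$ is inactive in the first case and active in the second.

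In the easy case $Q_1[t] \leq C_0 V$, we have $Q_1[t]\wedge C_0 V = Q_1[t]$, so the desired inequality collapses to
\[
\expect{V(z_0[t]-z^{opt}) + Q_1[t]z_1[t]\mid \mathcal{F}_{t-1}} \leq 0,
\]
which is precisely the key feature \eqref{key-feature} from Lemma 1 applied with $L=1$. Nothing more is needed.

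In the hard case $Q_1[t] > C_0 V$, we have $Q_1[t]\wedge C_0 V = C_0 V$, so dividing by $V$ it suffices to show
\[
\expect{z_0[t]-z^{opt}\mid \mathcal{F}_{t-1}} + C_0\,\expect{z_1[t]\mid \mathcal{F}_{t-1}} \leq 0.
\]
The first term is at most $2z_{\max}$ since $|z_0[t]|\leq z_{\max}$ and $|z^{opt}|\leq z_{\max}$. For the second term, I would use the hypothesis $V\geq B/C_0$, which gives $C_0 V\geq B$. When $Q_1[t]>C_0 V\geq B$ and $|z_1[t]|\leq B$, we have $Q_1[t]+z_1[t]\geq 0$ almost surely, so the queue update simplifies to $Q_1[t+1]=Q_1[t]+z_1[t]$ and hence $z_1[t]=Q_1[t+1]-Q_1[t]$. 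Substituting into the drift bound \eqref{one-constraint-2} of Lemma \ref{geometric_bound} then yields $\expect{z_1[t]\mid \mathcal{F}_{t-1}}\leq -\xi/2$. Plugging in:
\[
\expect{z_0[t]-z^{opt}\mid \mathcal{F}_{t-1}} + C_0\,\expect{z_1[t]\mid \mathcal{F}_{t-1}} \leq 2z_{\max} - \tfrac{C_0\xi}{2}.
\]
Finally, the definition $C_0=(4z_{\max}+B^2/V-\xi^2/(4V))/\xi$ gives $C_0\xi/2 = 2z_{\max} + (B^2-\xi^2/4)/(2V)$, so the right-hand side equals $-(B^2-\xi^2/4)/(2V)\leq 0$ using $B\geq \xi$ noted in the proof of Lemma \ref{geometric_bound}.

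The main obstacle is the second case, and specifically the step of promoting the queue-norm drift from Lemma \ref{geometric_bound} (which controls $Q_1[t+1]-Q_1[t]$) into a bound on the raw control variable $\expect{z_1[t]\mid \mathcal{F}_{t-1}}$. This is exactly where the technical hypothesis $V\geq B/C_0$ is consumed: it rules out the $\max\{\cdot,0\}$ projection in the queue update so that $z_1[t]$ and $Q_1[t+1]-Q_1[t]$ coincide on the event $\{Q_1[t]>C_0 V\}$. Once that identification is made, the choice of $C_0$ is tight enough to absorb the $2z_{\max}$ overshoot, and the proof is complete.
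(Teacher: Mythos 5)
Your proposal is correct and matches the paper's own proof essentially step for step: the same case split on whether $Q_1[t]\leq C_0V$ (where \eqref{key-feature} applies directly) or $Q_1[t]>C_0V$ (where $V\geq B/C_0$ removes the $\max\{\cdot,0\}$ in the queue update so that \eqref{one-constraint-2} yields $\expect{z_1[t]\mid\mathcal{F}_{t-1}}\leq-\xi/2$), followed by the same substitution of the definition of $C_0$ to absorb the $2z_{\max}$ term. Your version is merely the paper's computation divided through by $V$, giving $-(B^2-\xi^2/4)/(2V)\leq 0$ instead of $-(B^2-\xi^2/4)/2<0$.
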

\begin{proof}
Since $Q_1[t]\in\mathcal{F}_{t-1}$, we analyze the conditional expectation for the following two cases:\\
1. If $Q_1[t]\leq C_0V$, then, the key feature inequality \eqref{key-feature} implies that
  \begin{align*}
  &\expect{\left.V(z_0[t]-z^{opt})+(Q_1[t]\wedge C_0V)z_1[t]~\right|~\mathcal{F}_{t-1}}\\
  =&\expect{\left.V(z_0[t]-z^{opt})+Q_1[t]z_1[t]~\right|~\mathcal{F}_{t-1}}\leq0.
  \end{align*}
2. If $Q_1[t]> C_0V$, then,
  \begin{align*}
  -\frac\xi2\geq&\expect{Q_1[t+1]-Q_1[t]|\mathcal{F}_{t-1}}\\
  =&\expect{\max\{Q_1[t]+z_1[t],0\}-Q[t]|\mathcal{F}_{t-1}}\\
  =&\expect{z_1[t]|\mathcal{F}_{t-1}},
  \end{align*}
  where the inequality follows from \eqref{one-constraint-2},
  the first equality follows from the queue updating rule \eqref{queue_update}, and the second equality follows from the fact when $Q_1[t]>C_0V$ and $V\geq B/C_0$, the $\max\{\cdot\}$ can be removed.
  Thus, the following chain of inequalities holds
  \begin{align*}
  &\expect{\left.V(z_0[t]-z^{opt})+(Q_1[t]\wedge C_0V)z_1[t]~\right|~\mathcal{F}_{t-1}}\\
  =&\expect{\left.V(z_0[t]-z^{opt})+C_0Vz_1[t]~\right|~\mathcal{F}_{t-1}}\\
  \leq&\expect{\left.2Vz_{\max}+C_0Vz_1[t]~\right|~\mathcal{F}_{t-1}}\\
  =&2Vz_{\max}+C_0V\expect{z_1[t]|\mathcal{F}_{t-1}}\\
  \leq&2Vz_{\max}-C_0V\xi/2.
  \end{align*}
  Substitute the definition $C_0= (4z_{\max}+\frac{B^2}{V}-\frac{\xi^2}{4V})/\xi$,
  \begin{align*}
  &\expect{\left.V(z_0[t]-z^{opt})+(Q_1[t]\wedge C_0V)z_1[t]~\right|~\mathcal{F}_{t-1}}\\
  \leq&2Vz_{\max}-\frac{V\xi}{2}\cdot\frac{4z_{\max}+\frac{B^2}{V}-\frac{\xi^2}{4V}}{\xi}\\
  =&-(B^2-\xi^2/4)/2<0,
  \end{align*}
since $B$ defined in Assumption \ref{assumption-1} satisfies $|z_1[t]|\leq B,~\forall t$.
\end{proof}
The following corollary follows directly from the above lemma. Its proof is similar to that of Lemma \ref{supMG}.

\begin{corollary}\label{supMG-2}
Define a process $\{G[t]\}_{t=0}^\infty$ such that $G[0]=0$ and
\[G[t]\triangleq\sum_{i=1}^t\left(V(z_0[i]-z^{opt})+(Q_1[i]\wedge C_0V)z_1[i]\right).\]
Then, $\{G[t]\}_{t=0}^\infty$ is a supermartingale.
\end{corollary}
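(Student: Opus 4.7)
The plan is to mimic the proof of Lemma \ref{supMG}, but with the inequality of Lemma \ref{deter-truncation} playing the role that the original key feature inequality \eqref{key-feature} played there. Concretely, I need to verify the three defining properties of a supermartingale for $\{G[t]\}_{t=0}^\infty$ with respect to the filtration $\{\mathcal{F}_t\}$: adaptedness, integrability, and the one-step conditional inequality $\expect{G[t]\mid\mathcal{F}_{t-1}}\leq G[t-1]$.

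Adaptedness is immediate from the definition of $\mathcal{F}_t$: each summand involves $z_0[i]$, $z_1[i]$, and $Q_1[i]$ for $i\leq t$, and the footnote after the definition of $\mathcal{F}_t$ says precisely that these random variables are $\mathcal{F}_t$-measurable. Integrability also follows at once: by Assumption \ref{assumption-1} we have $|z_0[i]|\leq z_{\max}$ and, when $L=1$, $|z_1[i]|\leq B$; combined with the deterministic bound $Q_1[i]\wedge C_0V\leq C_0V$, each summand is bounded in absolute value by $V(z_{\max}+|z^{opt}|)+BC_0V$, so $|G[t]|<\infty$ uniformly.

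For the one-step conditional inequality, I would write
\begin{align*}
G[t]=G[t-1]+V(z_0[t]-z^{opt})+(Q_1[t]\wedge C_0V)z_1[t],
\end{align*}
observe that $G[t-1]\in\mathcal{F}_{t-1}$ (so it pulls out of the conditional expectation), and then apply Lemma \ref{deter-truncation} directly to the remaining two terms to obtain
\begin{align*}
\expect{G[t]\mid\mathcal{F}_{t-1}}\leq G[t-1]+0=G[t-1],
\end{align*}
which is exactly the supermartingale property.

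There is no real obstacle here, since all the substantive work has already been carried out in Lemma \ref{deter-truncation}; the only point that requires a brief sanity check is that the truncation $Q_1[t]\wedge C_0V$ is indeed $\mathcal{F}_{t-1}$-measurable, which is true because $Q_1[t]=\max\{Q_1[t-1]+z_1[t-1],0\}$ is built from random variables in $\mathcal{F}_{t-1}$, and the function $x\mapsto x\wedge C_0V$ is continuous. The corollary then follows in essentially one line from Lemma \ref{deter-truncation}.
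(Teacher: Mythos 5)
Your proposal is correct and follows essentially the same route the paper intends: the paper states the corollary "follows directly" from Lemma \ref{deter-truncation} with a proof "similar to that of Lemma \ref{supMG}," which is exactly your argument (adaptedness, boundedness of the summands, and the one-step conditional inequality supplied by Lemma \ref{deter-truncation}). The only detail worth noting is that the conclusion implicitly carries the hypothesis $V\geq B/C_0$ from Lemma \ref{deter-truncation}, a condition the paper also leaves implicit in the corollary and later enforces via $\varepsilon\in(0,C_0/B]$.
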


\subsection{A detailed analysis of the truncated queue process}
In Section \ref{interpretation}, we illustrated the relation between $\Delta[t]$ and $\sum_{i=1}^LQ_l[t]z_l[t]$ in \eqref{pre_dpp_upper_bound} thereby using the fact that $\sum_{t=1}^T\Delta[t]$ is a telescoping sum equal to $\frac12\|\mathbf{Q}[T+1]\|^2$ to prove Theorem \ref{main-theorem}. However, since we have truncated the queue in the process $G[t]$, the telescoping relation does not hold for $\sum_{t=1}^T(Q_1[t]\wedge C_0V)z_1[t]$ anymore. The following argument shows that $\sum_{t=1}^T(Q_1[t]\wedge C_0V)z_1[t]$ is actually not far away from a telescoping sum.

Let $n_j$ be the $j$-th time the queue process $Q_1[t]$ visits $[0,C_0V]$ and $n_0=1$.\footnote{If $Q_1[t]$ stays within $[0,C_0V]$ for two consecutive time slots $t$ and $t+1$, then, this is counted as two units.} Define $\tau_j\triangleq n_{j+1}-n_j$ as the inter-visit time period. Let $n_J$ be the last time slot in $\{1,2,\ldots,T+1\}$ that $Q_1[t]$ visits $[0,C_0V]$. The following lemma analyzes the partial sum of $(Q_1[t]\wedge C_0V)z_1[t]$ from 1 to $n_J-1$. Its proof involves a series of algebraic manipulations and is postponed to Appendix \ref{proof-of-telescoping}.

\begin{lemma}
Suppose $n_J>1$, then, the following holds for $V\geq B/C_0$,
\[\left|\sum_{t=1}^{n_J-1}(Q_1[t]\wedge C_0V)z_1[t]-\frac12Q[n_J]^2\right|\leq\frac52B^2(n_J-1),\]
where $B$ is defined in Assumption \ref{assumption-1} and $C_0$ is defined in Lemma \ref{geometric_bound}.
\end{lemma}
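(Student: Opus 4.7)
My plan is to reduce the truncated sum to a standard telescoping $Q_1^2$ identity plus a correction term handled by excursion bookkeeping. The starting point is the algebraic identity $(Q_1[t]\wedge C_0V)z_1[t] = Q_1[t]z_1[t] - (Q_1[t]-C_0V)^+ z_1[t]$, which decomposes the sum of interest as
\[
\sum_{t=1}^{n_J-1}(Q_1[t]\wedge C_0V)z_1[t] \;=\; \mathrm{(I)} \;-\; \mathrm{(II)},
\]
where $\mathrm{(I)}=\sum_{t=1}^{n_J-1}Q_1[t]z_1[t]$ and $\mathrm{(II)}=\sum_{t=1}^{n_J-1}(Q_1[t]-C_0V)^+ z_1[t]$. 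I then aim to show $(\mathrm{I})\approx \tfrac12 Q_1[n_J]^2$ and that $|(\mathrm{II})|$ grows only linearly in $n_J-1$.

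For $(\mathrm{I})$ I would use the classical bound $Q_1[t+1]^2 \le Q_1[t]^2 + 2Q_1[t]z_1[t] + z_1[t]^2$, which becomes an equality whenever $Q_1[t]+z_1[t]\ge 0$. Writing the per-slot slack as $E_t=(Q_1[t]+z_1[t])^2 - Q_1[t+1]^2$, which is nonnegative and in fact dominated by $z_1[t]^2$, summing from $t=1$ to $n_J-1$ and using $Q_1[1]=0$ gives the telescoping identity
\[
\sum_{t=1}^{n_J-1} Q_1[t]z_1[t] = \tfrac12 Q_1[n_J]^2 + \tfrac12\sum_{t=1}^{n_J-1}(E_t - z_1[t]^2),
\]
so that $|E_t-z_1[t]^2|\le z_1[t]^2\le B^2$ yields $|(\mathrm{I})-\tfrac12 Q_1[n_J]^2|\le\tfrac12 B^2(n_J-1)$.

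For $(\mathrm{II})$, the summand is supported on slots where $Q_1[t]>C_0V$, which organize into the excursions $\{n_j+1,\ldots,n_{j+1}-1\}$ indexed by $j$ with $\tau_j>1$. The key use of the hypothesis $V\ge B/C_0$ is that inside such an excursion $Q_1[t]>C_0V\ge B\ge|z_1[t]|$, so the queue never reflects at zero and the shifted process $P_t:=Q_1[t]-C_0V$ obeys the exact recursion $P_{t+1}=P_t+z_1[t]$. Applying $2P_tz_1[t]=P_{t+1}^2-P_t^2-z_1[t]^2$ and summing along each excursion leaves a closed form in terms of the endpoint values $P_{n_j+1}$ and $P_{n_{j+1}}$. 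A short boundary check using $Q_1[n_j]\le C_0V$, $Q_1[n_{j+1}]\le C_0V$, $Q_1[n_{j+1}-1]>C_0V$, and $|z_1[t]|\le B$ shows that both $|P_{n_j+1}|$ and $|P_{n_{j+1}}|$ are at most $B$, so each excursion contributes at most $\tfrac12 B^2+\tfrac12 B^2(\tau_j-1)$ in absolute value. Summing over excursions (using that the number of excursions is at most the total excursion length, itself at most $n_J-1$) produces $|(\mathrm{II})|\le B^2(n_J-1)$.

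Combining the two estimates gives a bound of $\tfrac32 B^2(n_J-1)$; the slightly larger $\tfrac52$ in the statement is most likely the result of looser intermediate bounds preserved for presentation. The main obstacle I anticipate is the excursion-endpoint bookkeeping: making rigorous both that the queue cannot hit zero within an excursion (so that the shifted process $P_t$ telescopes exactly) and that $|P|$ is at most $B$ at entry and exit. Both claims rely in an essential way on the compatibility condition $V\ge B/C_0$ between the truncation threshold $C_0V$ and the per-slot step bound $B$.
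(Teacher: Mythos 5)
Your argument is correct, and it reaches a slightly stronger conclusion (constant $\tfrac32$ instead of $\tfrac52$), so the lemma follows a fortiori. Your route differs from the paper's in how the algebra is organized: the paper works excursion by excursion directly with the truncated summand, comparing $\sum_{t=n_j}^{n_j+\tau_j-1}(Q_1[t]\wedge C_0V)z_1[t]$ to $\tfrac12\bigl(Q_1[n_{j+1}]^2-Q_1[n_j]^2\bigr)$ via nested triangle inequalities (replacing $C_0V$ by $Q_1[n_j+1]$ inside the excursion and separately bounding the $|Q_1[n_j+1]-C_0V|$ error), obtaining a per-excursion bound $(\tau_j+\tfrac32)B^2$ and then telescoping over $j$; you instead split $(Q_1[t]\wedge C_0V)z_1[t]=Q_1[t]z_1[t]-(Q_1[t]-C_0V)^+z_1[t]$, telescope the untruncated part globally with the per-slot reflection error $E_t\in[0,z_1[t]^2]$ (giving $\tfrac12 B^2(n_J-1)$), and handle the excess term per excursion through the shifted process $P_t=Q_1[t]-C_0V$. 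Both proofs rest on the same essential facts — the excursion structure of the visits to $[0,C_0V]$, the absence of reflection above the threshold guaranteed by $V\geq B/C_0$, and the entry/exit values lying within $B$ of $C_0V$ — and your endpoint bookkeeping ($0<P_{n_j+1}\leq B$, $-B\leq P_{n_{j+1}}\leq 0$, number of nontrivial excursions at most the total excursion length $\leq n_J-1$) checks out. What your decomposition buys is cleaner bookkeeping and the tighter constant; what the paper's buys is that the target quantity $\tfrac12 Q_1[n_J]^2$ emerges directly from cancelling the intermediate $Q_1[n_j]^2$ terms, without introducing the auxiliary split into (I) and (II).
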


The following lemma bounds the time average of the truncated ``drift'' term at any time $T$ (i.e., $\frac1T\sum_{t=1}^T(Q_1[t]\wedge C_0V)z_1[t]$) from below by a constant using the previous lemma,
thereby demonstrating that the ``drift'' term cannot get very small.

\begin{lemma}\label{appr-telescoping-sum}
For any $T\in\mathbb{N}$ and $V\geq B/C_0$, we have
\[\frac1T\sum_{t=1}^T(Q_1[t]\wedge C_0V)z_1[t]\geq-\frac52B^2.\]
\end{lemma}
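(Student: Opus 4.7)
The plan is to leverage the bound from the preceding lemma (which controls $\sum_{t=1}^{n_J-1}(Q_1[t]\wedge C_0V)z_1[t]$) and then handle the remaining tail sum $\sum_{t=n_J}^{T}(Q_1[t]\wedge C_0V)z_1[t]$ by a direct telescoping calculation, splitting the argument according to whether $n_J=T+1$ or $n_J\le T$. A preliminary observation is that $n_J\ge 2$ always holds: since $Q_1[1]=0$, the queue update gives $Q_1[2]\le |z_1[1]|\le B\le C_0V$ (using $V\ge B/C_0$), so $Q_1[2]\in[0,C_0V]$, meaning the previous lemma is applicable.

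In the easy case $n_J=T+1$, the entire sum is $\sum_{t=1}^{n_J-1}(Q_1[t]\wedge C_0V)z_1[t]$, and the preceding lemma together with $Q_1[n_J]^2\ge 0$ immediately yields $S\ge -\tfrac52 B^2 T$, hence $S/T\ge -\tfrac52 B^2$. In the case $n_J\le T$, I would split
\[
S=\sum_{t=1}^{n_J-1}(Q_1[t]\wedge C_0V)z_1[t]\;+\;\sum_{t=n_J}^{T}(Q_1[t]\wedge C_0V)z_1[t],
\]
apply the preceding lemma to the first piece (giving $\ge -\tfrac52 B^2(n_J-1)$ after dropping $\tfrac12 Q_1[n_J]^2\ge 0$), and work out the second piece explicitly. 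For $t\in\{n_J+1,\dots,T+1\}$ one has $Q_1[t]>C_0V$ so $(Q_1[t]\wedge C_0V)=C_0V$, and since $Q_1[t]\ge C_0V\ge B\ge|z_1[t-1]|$, the projection onto $[0,\infty)$ in the queue update never activates, i.e.\ $z_1[t]=Q_1[t+1]-Q_1[t]$ for all $t\ge n_J$. A short computation then gives
\[
\sum_{t=n_J}^{T}(Q_1[t]\wedge C_0V)z_1[t]=(Q_1[n_J]-C_0V)z_1[n_J]+C_0V\bigl(Q_1[T+1]-Q_1[n_J]\bigr).
\]

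The main obstacle is bounding the residual term $(Q_1[n_J]-C_0V)z_1[n_J]$, because naive bounds only give $-C_0V\cdot B$, which is of the wrong order (it scales with $V$ and ruins the $-\tfrac52 B^2$ target after dividing by $T$). The key observation is that because $Q_1[n_J+1]>C_0V$ and $Q_1[n_J+1]=Q_1[n_J]+z_1[n_J]$, the increment $z_1[n_J]$ must itself satisfy $0<C_0V-Q_1[n_J]<z_1[n_J]\le B$, so $(Q_1[n_J]-C_0V)z_1[n_J]\ge -(C_0V-Q_1[n_J])\cdot B\ge -B\cdot B=-B^2$; the remaining term $C_0V(Q_1[T+1]-Q_1[n_J])$ is nonnegative because $Q_1[T+1]>C_0V\ge Q_1[n_J]$ and may be discarded. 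Combining, $S\ge -\tfrac52 B^2(n_J-1)-B^2$, and since $n_J\le T$ gives $n_J-1\le T-1$, one gets
\[
\frac{S}{T}\;\ge\;-\frac52 B^2\cdot\frac{T-1}{T}-\frac{B^2}{T}\;=\;-\frac52 B^2+\frac{3B^2}{2T}\;\ge\;-\frac52 B^2,
\]
finishing the proof. The crucial qualitative point throughout is that once the queue leaves $[0,C_0V]$, its dynamics reduce to a pure random walk (no reflection), so telescoping is exact on the overflow portion and only the single transition step at $t=n_J$ contributes a residual, which is controlled by the same increment $z_1[n_J]$ that caused the overflow.
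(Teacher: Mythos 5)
Your proof is correct and follows essentially the same route as the paper's: the same case split on $n_J=T+1$ versus $n_J\le T$, the same use of the preceding lemma on the prefix, and the same observation that above $C_0V$ the queue update telescopes exactly so that only the transition step at $t=n_J$ leaves a residual bounded by $B^2$ (via $C_0V-B<Q_1[n_J]\le C_0V$ and $|z_1[n_J]|\le B$), with $Q_1[T+1]>C_0V\ge Q_1[n_J]$ letting you discard the telescoped term. Your explicit check that $n_J\ge 2$ (so the preceding lemma is applicable) is a small completeness bonus the paper leaves implicit, but it is not a different argument.
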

\begin{proof}
We analyze the following two cases:\\
1. If $n_J=T+1$, then, by the above lemma,
  \begin{align*}
    \sum_{t=1}^T(Q_1[t]\wedge C_0V)z_1[t]=&\sum_{t=1}^{n_J-1}(Q_1[t]\wedge C_0V)z_1[t]\\
    \geq&\frac12Q_1[T+1]^2-\frac52B^2T\\
    \geq&-\frac52B^2T.
  \end{align*}
2. If $n_J<T+1$, then, this implies that $Q_1[t]>C_0V$, for all $t\in\{n_J+1,\ldots,T+1\}$. Thus,
  \begin{align*}
  &\sum_{t=1}^T(Q_1[t]\wedge C_0V)z_1[t]\\
  =&\sum_{t=1}^{n_J-1}(Q_1[t]\wedge C_0V)z_1[t]+\sum_{t=n_J}^{T}(Q_1[t]\wedge C_0V)z_1[t]\\
  \geq&-\frac52B^2(n_{J}-1)+\frac{1}{2}Q_1[n_{J}]^2+\sum_{t=n_J}^{T}(Q_1[t]\wedge C_0V)z_1[t]\\
  =&-\frac52B^2(n_{J}-1)+\frac{1}{2}Q_1[n_{J}]^2+\sum_{t=n_J+1}^{T}C_0Vz_1[t]\\
  &+Q_1[n_J]z_1[n_J]\\
  \geq&-\frac52B^2(n_{J}-1)+\frac{1}{2}Q_1[n_{J}]^2+C_0V\sum_{t=n_J}^{T}z_1[t]-B^2.
  \end{align*}
  where the second to last steps follows from the fact $Q_1[t]>C_0V$ for $t\in\{n_J+1,\ldots,T\}$ and
  the last step follows from $C_0V\geq Q_1[n_J]>C_0V-B$.

  Since $V\geq B/C_0$, it follows, $Q_1[t+1]=Q_1[t]+z_1[t],~\forall t\in\{n_J,\ldots,T\}$.
  Since $Q_1[T+1]>C_0V$ $Q_1[n_J]\leq C_0V$, we have
  \[\sum_{t=n_J}^{T}z_1[t]=Q_1[T+1]-Q_1[n_J]>0.\]
  Thus,
\[\sum_{t=1}^T(Q_1[t]\wedge C_0V)z_1[t] \geq-\frac52B^2n_{J}\geq-\frac52B^2T,\]
finishing the proof.
\end{proof}

\subsection{Convergence time analysis}
The following lemma is a direct application of Lemma \ref{azuma-inequality} and Corollary \ref{supMG-2}.
\begin{lemma}\label{main_lemma-2}
Under the proposed drift-plus-penalty algorithm, for any $\delta\in(0,1)$, any $T\in\mathbb{N}$ and any $V\geq B/C_0$,
\begin{align*}
&Pr\left(\frac{1}{T}\sum_{t=1}^T\left(V(z_0[t]-z^{opt})+(Q_1[t]\wedge C_0V) z_1[t]\right)\right.\\
&\left.\leq 2C_2V\frac{\log(1/\delta)}{\sqrt{T}}\right)\geq 1-\delta,
\end{align*}
where
$C_2=2z_{\max}+C_0B$, $C_0$ is defined in Lemma \ref{geometric_bound}, and $B$, $z_{\max}$ are defined in Assumption \ref{assumption-1}.
\end{lemma}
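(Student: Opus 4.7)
The plan is to recognize that Lemma \ref{main_lemma-2} is essentially a one-line invocation of the Azuma--Hoeffding inequality (Lemma \ref{azuma-inequality}) applied to the supermartingale $\{G[t]\}_{t=0}^{\infty}$ constructed in Corollary \ref{supMG-2}. The only nontrivial preliminary is verifying the bounded-difference hypothesis, which thanks to the deterministic truncation is now a routine uniform bound.

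First I would check the bounded-difference property for $G[t]$. The one-step increment is
\[G[t]-G[t-1]=V(z_0[t]-z^{opt})+(Q_1[t]\wedge C_0V)z_1[t].\]
Assumption \ref{assumption-1} gives $|z_0[t]|\leq z_{\max}$, and since $z^{opt}$ is achievable by a feasible policy satisfying the same assumption, $|z^{opt}|\leq z_{\max}$; hence $|V(z_0[t]-z^{opt})|\leq 2Vz_{\max}$. For the second term, $0\leq Q_1[t]\wedge C_0V\leq C_0V$ by construction, and $|z_1[t]|\leq B$ (a consequence of the vector bound in Assumption \ref{assumption-1} with $L=1$), so $|(Q_1[t]\wedge C_0V)z_1[t]|\leq C_0VB$. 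Combining, $|G[t]-G[t-1]|\leq V(2z_{\max}+C_0B)=C_2V$ uniformly in $t$ and $\omega$. This is precisely the improvement over the multi-constraint setting: the truncation is deterministic, so no stopping-time detour is needed.

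Next I would apply Lemma \ref{azuma-inequality} to the supermartingale $\{G[t]\}$ (with $G[0]=0$) with difference bound $C_2V$. This yields, for every $\lambda>0$,
\[Pr\bigl(G[T]\geq\lambda\bigr)\leq\exp\!\left(-\frac{\lambda^2}{2T(C_2V)^2}\right).\]
Setting the right-hand side equal to $\delta$ gives $\lambda=C_2V\sqrt{2T\log(1/\delta)}$, which upon dividing by $T$ shows that with probability at least $1-\delta$,
\[\frac{1}{T}\sum_{t=1}^{T}\!\left(V(z_0[t]-z^{opt})+(Q_1[t]\wedge C_0V)z_1[t]\right)\leq C_2V\,\frac{\sqrt{2\log(1/\delta)}}{\sqrt{T}}.\]
To match the form stated in the lemma, I would absorb the square root using the crude bound $\sqrt{2\log(1/\delta)}\leq 2\log(1/\delta)$, which is valid whenever $\log(1/\delta)\geq 1/2$ (and in the regime of interest $\delta$ will be small, so this holds); otherwise the stated bound is even weaker than what Azuma directly delivers, so it still holds.

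There is no real obstacle here: all the conceptual work was done in constructing $G[t]$ (Lemma \ref{deter-truncation} and Corollary \ref{supMG-2}) and in establishing the drift bound for $Q_1[t]$ that justifies the truncation at $C_0V$. The only thing to be careful about is confirming that $|z^{opt}|\leq z_{\max}$ so that the $V(z_0[t]-z^{opt})$ contribution to the difference is genuinely bounded by $2Vz_{\max}$ rather than needing any additional assumption on the optimal value.
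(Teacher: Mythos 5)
Your proposal is correct and follows essentially the same route as the paper: verify the uniform bound $|G[t]-G[t-1]|\leq V(2z_{\max}+C_0B)=C_2V$ afforded by the deterministic truncation, then apply Lemma \ref{azuma-inequality} to the supermartingale of Corollary \ref{supMG-2} and solve for $\lambda$. Your explicit handling of the step $\sqrt{2\log(1/\delta)}\leq 2\log(1/\delta)$ (and of $|z^{opt}|\leq z_{\max}$) is in fact slightly more careful than the paper, which silently absorbs the same factor into the constant $2C_2$.
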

\begin{proof}
First of all,
we have
\begin{align*}
|G[t]-G[t-1]|=&|V(z_0[t]-z^{opt})+(Q_1[t]\wedge C_0V) z_1[t]|.\\
\leq& 2Vz_{\max}+C_0VB
\end{align*}
Thus, by Lemma \ref{azuma-inequality} and Corollary \ref{supMG-2},
\[Pr(G[T]\geq\lambda)\leq\exp\left(-\frac{\lambda^2}{2TC_2^2V^2}\right).\]
Let $\delta=\exp\left(-\frac{\lambda^2}{2TC_2^2V^2}\right)$, so we get $\lambda=2C_2\sqrt{T}\log\frac1\delta$. Thus,
\[Pr\left(G[T]\geq2C_2V\sqrt{T}\log\frac1\delta\right)\leq\delta,\]
which implies the claim.
\end{proof}

The following is our main theorem regarding the convergence time for this $L=1$ case.
\begin{theorem}\label{main-theorem-2}
Fix $\varepsilon\in(0,C_0/B]$, $\delta\in(0,1)$ and define $V=1/\varepsilon$. Then, for any $T\geq\frac{1}{\varepsilon^2}\log^2\frac1\delta$, with probability at least $1-2\delta$, one has:
\begin{align}
\frac{1}{T}\sum_{t=1}^Tz_0[t]&\leq z^{opt}+\mathcal{O}(\varepsilon),\label{near-optimality-2}\\
\frac{1}{T}\sum_{t=1}^Tz_1[t]&\leq \mathcal{O}(\varepsilon), \label{constraint-violation-2}
\end{align}
and so the drift-plus-penalty algorithm with parameter $V=1/\varepsilon$ provides an $\mathcal{O}(\varepsilon)$ approximation with a convergence time $\frac{1}{\varepsilon^2}\log^2\frac1\delta$.
\end{theorem}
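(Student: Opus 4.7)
The plan is to mirror the structure of the proof of Theorem~\ref{main-theorem} (given in Appendix~\ref{proof-theorem-2}) but substitute the cleaner concentration inequality of Lemma~\ref{main_lemma-2} in place of Lemma~\ref{main_lemma}, and substitute the quasi-telescoping estimate of Lemma~\ref{appr-telescoping-sum} in place of the exact telescoping identity $\sum_{t=1}^T\Delta[t]=\tfrac12\|\mathbf{Q}[T+1]\|^2$. Because the deterministic truncation already produces a supermartingale with bounded differences, we never have to pay the extra stopping-time log factors that appeared in the multi-constraint case, and the final bound sharpens accordingly.

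First, I would invoke Lemma~\ref{main_lemma-2} to obtain an event $\mathcal{E}_1$ of probability at least $1-\delta$ on which
\[
\frac{1}{T}\sum_{t=1}^T\!\Big(V(z_0[t]-z^{opt})+(Q_1[t]\wedge C_0V)\,z_1[t]\Big)\;\leq\;\frac{2C_2V\log(1/\delta)}{\sqrt{T}}.
\]
On $\mathcal{E}_1$, using Lemma~\ref{appr-telescoping-sum} to lower-bound the ``truncated drift'' term $\frac{1}{T}\sum_{t=1}^T (Q_1[t]\wedge C_0V)z_1[t]\geq -\tfrac52 B^2$, rearranging and dividing by $V=1/\varepsilon$ yields
\[
\frac{1}{T}\sum_{t=1}^T z_0[t]\;\leq\; z^{opt}+\frac{2C_2\log(1/\delta)}{\sqrt{T}}+\frac{5}{2}B^2\varepsilon.
\]
The hypothesis $T\geq \varepsilon^{-2}\log^2(1/\delta)$ gives $\log(1/\delta)/\sqrt{T}\leq \varepsilon$, so the right-hand side is $z^{opt}+\mathcal{O}(\varepsilon)$, establishing \eqref{near-optimality-2}. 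The constraint $\varepsilon\leq C_0/B$ (equivalently $V\geq B/C_0$) is exactly what is needed to apply Lemmas~\ref{main_lemma-2} and~\ref{appr-telescoping-sum}.

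For the constraint violation \eqref{constraint-violation-2}, I would use the exponential tail bound in part~2 of Corollary~\ref{geo_queue_bound}: choosing $c_1=\tfrac{1}{r}\log(D/\delta)$ gives an event $\mathcal{E}_2$ of probability at least $1-\delta$ on which $Q_1[T+1]\leq \tfrac{1}{r}\log(D/\delta)$. The virtual queue update rule \eqref{queue_update} implies, by a straightforward induction, $Q_1[T+1]\geq \sum_{t=1}^T z_1[t]$, so on $\mathcal{E}_2$
\[
\frac{1}{T}\sum_{t=1}^T z_1[t]\;\leq\;\frac{Q_1[T+1]}{T}\;\leq\;\frac{\log(D/\delta)}{rT}.
\]
The hypothesis $T\geq \varepsilon^{-2}\log^2(1/\delta)$ now gives $1/T\leq \varepsilon^2/\log^2(1/\delta)$, so this bound is $\mathcal{O}(\varepsilon^2/\log(1/\delta))$, which is certainly $\mathcal{O}(\varepsilon)$.

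Finally, a union bound on $\mathcal{E}_1\cap\mathcal{E}_2$ gives probability at least $1-2\delta$, proving the theorem. I do not expect any genuine obstacle here: all the real work has been absorbed into Lemma~\ref{main_lemma-2}, Lemma~\ref{appr-telescoping-sum}, and Corollary~\ref{geo_queue_bound}. The only mildly delicate point is verifying that the constant in \eqref{constraint-violation-2} is indeed $\mathcal{O}(\varepsilon)$ rather than merely $o(1)$, but the explicit calculation above shows the bound is in fact $\mathcal{O}(\varepsilon^2/\log(1/\delta))$, a strictly sharper estimate than the statement requires.
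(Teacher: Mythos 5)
Your proposal is correct and follows essentially the same route as the paper's own proof in Appendix~\ref{proof-theorem-3}: Lemma~\ref{main_lemma-2} combined with the lower bound of Lemma~\ref{appr-telescoping-sum} for the objective, the tail bound of Corollary~\ref{geo_queue_bound} applied to $Q_1[T+1]$ for the constraint, and a union bound. One small correction to your closing remark: since $D$ in Corollary~\ref{geo_queue_bound} contains the factor $e^{rC_0V}$, your choice $c_1=\frac1r\log(D/\delta)$ gives $\frac{Q_1[T+1]}{T}\leq\frac{1}{rT}\log\frac1\delta+\frac{C_0V}{T}+\frac{\text{const}}{rT}$, and with $V=1/\varepsilon$ and $T=\frac{1}{\varepsilon^2}\log^2\frac1\delta$ the dominant term $\frac{C_0V}{T}$ is $\mathcal{O}(\varepsilon)$, not $\mathcal{O}(\varepsilon^2/\log(1/\delta))$; this does not affect the validity of \eqref{constraint-violation-2}, only the claimed sharper rate.
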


The proof given in Appendix \ref{proof-theorem-3} is similar to the proof of Theorem \ref{main-theorem}. First, we use Lemma \ref{appr-telescoping-sum} to get rid of the term $(Q_1[t]\wedge C_0V) z_1[t]$ in Lemma \ref{main_lemma-2}, thereby proving the $\varepsilon$-suboptimality on the objective. Then, we pass the exponential decay of the virtual queue in Corollary \ref{geo_queue_bound} to one constraint case and bound the constraint violation.

\section{Application on Dynamic Server Scheduling}\label{section:simulation}
\begin{figure}[htbp]
   \centering
   \includegraphics[height=1.5in]{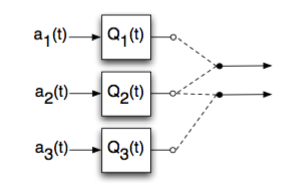} 
   \caption{A 3-queue 2-server system.}
   \label{fig:Stupendous1}
\end{figure}
In this section, we demonstrate the performance of drift-plus-penalty algorithm via a dynamic server scheduling example.
Consider a 3-queue 2-server system with an i.i.d. packet arrival process $\{\mathbf{a}[t]\}_{t=1}^\infty$ shown in Fig. \ref{fig:Stupendous1}. All packets have fixed length and each entry of $\mathbf{a}[t]$ is Bernoulli with mean
\[\expect{\mathbf{a}[t]}=(0.5~0.7~0.4).\]
During each time slot, the controller chooses which two queues to be served by the server. A queue that is allocated a server on a given slot can server exactly one packet on that slot. A single queue cannot receive two servers during the same slot. Thus, the service is given by
 \[
 b_i[t]=
 \begin{cases}
 1,~\textrm{if queue $i$ gets served at time $t$,}\\
 0,~\textrm{otherwise,}
 \end{cases}
 \]
and the service vector $\mathbf{b}[t]\in\{(1,1,0),~(1,0,1),~(0,1,1)\}$. Suppose further that choosing $(1,1,0)$ and $(1,0,1)$ consumes 1 unit of energy whereas choosing $(0,1,1)$ consumes 2 units of energy. Let $p[t]$ be the energy consumption at time slot $t$. 
The goal is to minimize the time average energy consumption while stabilizing all the queues. 
In view of the formulation \eqref{obj-problem1}-\eqref{constraint-2-problem1}, $w[t]=\mathbf{a}[t]$, $z_0[t]=p[t]$, 
$(z_1[t]~z_2[t]~z_3[t])=\mathbf{a}[t]-\mathbf{b}[t]$ and 
$$\mathcal{A}(w[t])=\{\mathbf{a}[t]-(1,1,0),~\mathbf{a}[t]-(1,0,1),~\mathbf{a}[t]-(0,1,1)\}.$$
Thus, we can write \eqref{obj-problem1}-\eqref{constraint-2-problem1} as
\begin{align*}
\min~&\overline{p}\\
s.t.~&\overline{a_i-b_i}\leq0,~i\in\{1,2,3\}.
\end{align*}
where 
$$\overline{p}=\limsup_{T\rightarrow\infty}\frac1T\sum_{t=1}^Tp[t]$$ 
and 
$$\overline{a_i-b_i}=\limsup_{T\rightarrow\infty}\frac1T\sum_{t=1}^T(a_i[t]-b_i[t]).$$
Using drift-plus penalty algorithm, we can solving the problem via the following:
\begin{align*}
\min~~&Vp[t]-\sum_{i=1}^3Q_i[t]b_i[t]\\
s.t.~~&\mathbf{b}[t]\in\{(1,1,0),~(1,0,1),~(0,1,1)\}.
\end{align*}
This can be easily solved via comparing the following values:
\begin{itemize}
\item Option $(1,1,0)$: $V-Q_1[t]-Q_2[t]$.
\item Option $(1,0,1)$: $V-Q_1[t]-Q_3[t]$.
\item Option $(0,1,1)$: $2V-Q_2[t]-Q_3[t]$.
\end{itemize}
Thus, during each time slot, the controller picks the option with the smallest of the above three values, breaking ties arbitrarily. This is a simple dynamic scheduling algorithm which does not need the statistics of the arrivals.

The benchmark we compare to is the optimal stationary algorithm which is i.i.d. over slots. It can be computed offline with the knowledge of the means of arrivals via the following linear program.
\begin{align*}
\min~~&q_1+q_2+2q_3\\
s.t.~~&q_1\geq0,~q_2\geq0,~q_3\geq0,\\
&q_1+q_2+q_3=1,\\
&
\left(
\begin{array}{ccc}
1  & 1  & 0  \\
1  & 0  & 1 \\
0  & 1  & 1  
\end{array}
\right)
\left(
\begin{array}{ccc}
  q_1   \\
  q_2  \\
  q_3   
\end{array}
\right)
\geq
\left(
\begin{array}{ccc}
  0.5   \\
  0.7  \\
  0.4   
\end{array}
\right),
\end{align*}
where $q_1,~q_2,~q_3$ stand for the probabilities of choosing corresponding options. Simple computations gives the solution: $q_1=0.6,~q_2=0.3,~q_3=0.1$ and the average energy consumption is 1.1 unit.

\begin{figure}[htbp]
   \centering
   \includegraphics[height=2.5in]{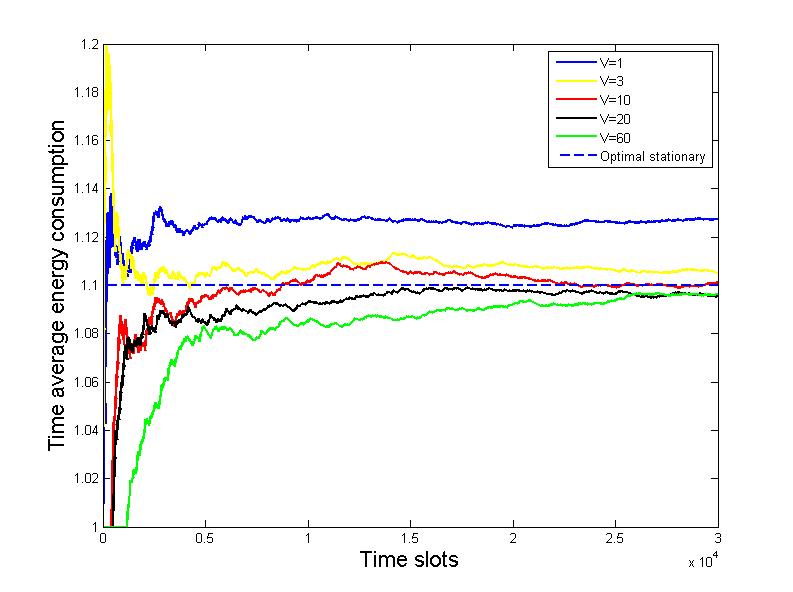} 
   \caption{Time average energy consumption with different V values.}
   \label{fig:Stupendous2}
\end{figure}

Fig. \ref{fig:Stupendous2} plots the time average energy consumption up to time $T$ (i.e., $\frac1T\sum_{t=1}^Tp[t]$) verses $T$. It can be seen that as $V$ gets larger, the time average approaches the optimal average energy consumption but it takes longer to get close to the optimal. Similarly, Fig. \ref{fig:Stupendous3} plots the time average sum-up queue size up to time $T$ (i.e.; $\frac1T\sum_{t=1}^T\sum_{i=1}^3Q_i[t]$) verses $T$. As $V$ gets larger, the time average queue size gets larger and it takes longer to stabilize the queues.

\begin{figure}[htbp]
   \centering
   \includegraphics[height=2.5in]{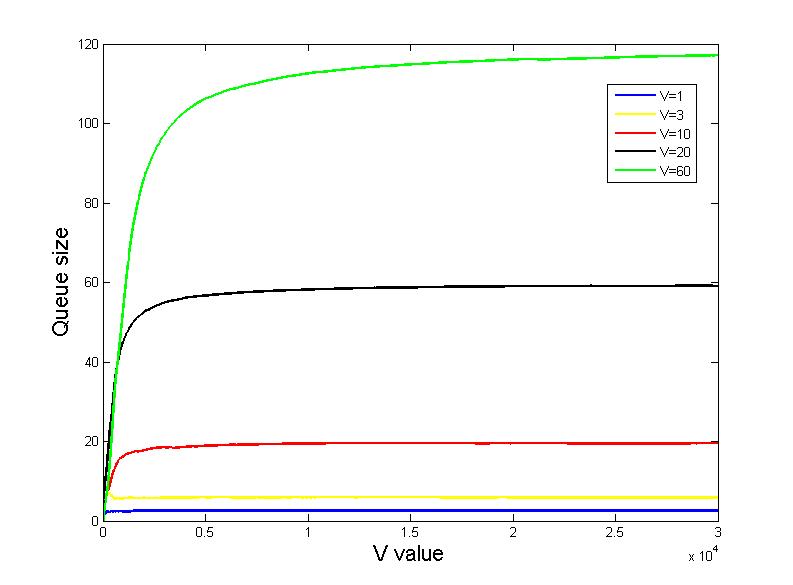} 
   \caption{Time average sum-up queue size with different V values.}
   \label{fig:Stupendous3}
\end{figure}

\section{Conclusions}
This paper analyzes the non-asymptotic performance of the drift-plus-penalty algorithm for stochastic constrained optimization via a truncation technique. With proper truncation level, we show that the drift plus penalty algorithm gives $\mathcal{O}(\varepsilon)$ approximation with a provably bounded convergence time. Furthermore, if there is only one constraint, the convergence time analysis can be improved significantly.

\appendices

\section{Basic definitions and lemmas}\label{app:basics}
The following definition of stopping time can be found in Chapter 4 of \cite{Durrett}.
\begin{definition}
Given a probability space $(\Omega, \mathcal{F}, P)$ and a filtration
$\{\varnothing, \Omega\}=\mathcal{F}_0\subseteq\mathcal{F}_1\subseteq\mathcal{F}_2\ldots$
in $\mathcal{F}$, a stopping time $N$ is a random variable such that for any $n<\infty$,
\[\{N=n\}\in\mathcal{F}_n,\]
i.e. the event that the stopping time occurs at time $n$ is contained in the information up to time $n$.
\end{definition}

The following theorem formalizes the idea of truncation.
\begin{lemma}\label{stopping_time}
(\textit{Theorem 5.2.6 in \cite{Durrett}}) If $N$ is a stopping time and $X[n]$ is a supermartingale, then $X[n\wedge N]$ is also a supermartingale, where $a\wedge b\triangleq\min\{a,b\}$.
\end{lemma}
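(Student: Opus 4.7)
The plan is to give the standard optional-stopping argument, directly verifying the three defining properties of a supermartingale for the process $Y[n] = X[n \wedge N]$. Measurability and integrability are quick: decomposing on $\{N = k\}$ for $k \leq n$ (where $Y[n] = X[k]$) and on $\{N > n\}$ (where $Y[n] = X[n]$), and using $\{N = k\} \in \mathcal{F}_k \subseteq \mathcal{F}_n$, one sees that $Y[n]$ is $\mathcal{F}_n$-measurable, while the pointwise bound $|Y[n]| \leq \sum_{k=0}^n |X[k]|$ gives integrability.

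The heart of the argument is the pointwise identity
\[Y[n+1] - Y[n] = (X[n+1] - X[n])\, \mathbf{1}\{N \geq n+1\},\]
which I would verify case-by-case: on $\{N \leq n\}$ both $Y[n+1]$ and $Y[n]$ equal $X[N]$, matching the vanishing indicator; on $\{N \geq n+1\}$ we have $(n+1) \wedge N = n+1$ and $n \wedge N = n$, so the difference reduces to $X[n+1] - X[n]$. Taking conditional expectation and using $\{N \geq n+1\} = \{N \leq n\}^c \in \mathcal{F}_n$, the indicator pulls out of the conditional expectation, and the supermartingale property of $X$ yields
\[\mathbb{E}\bigl[Y[n+1] - Y[n] \,\big|\, \mathcal{F}_n\bigr] = \mathbf{1}\{N \geq n+1\}\, \mathbb{E}\bigl[X[n+1] - X[n] \,\big|\, \mathcal{F}_n\bigr] \leq 0,\]
where the nonnegativity of the indicator is essential so that the sign of the supermartingale inequality is preserved under multiplication.

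The whole argument really rests on the single nontrivial point that the stopping-time condition $\{N = k\} \in \mathcal{F}_k$ forces $\{N \leq n\} \in \mathcal{F}_n$, and hence $\{N \geq n+1\} \in \mathcal{F}_n$; once this is granted, the rest is algebraic bookkeeping of the indicators on the two regions $\{N \leq n\}$ versus $\{N > n\}$, with no real conceptual obstacle. No extra integrability hypotheses are needed because stopping prevents any blow-up beyond the already-integrable partial maxima of $|X[k]|$.
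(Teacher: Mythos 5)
Your proof is correct. The paper itself gives no argument for this lemma---it is quoted verbatim as Theorem 5.2.6 of Durrett---and your derivation is precisely the standard one behind that result: the identity $Y[n+1]-Y[n]=(X[n+1]-X[n])\,\mathbf{1}\{N\geq n+1\}$ with the predictable, nonnegative indicator $\mathbf{1}\{N\geq n+1\}=\mathbf{1}\{N\leq n\}^{c}\in\mathcal{F}_n$ pulled out of the conditional expectation (Durrett phrases it as a martingale transform $H_m=\mathbf{1}\{N\geq m\}$, which is the same computation), and your measurability and integrability checks via the decomposition on $\{N=k\}$, $k\leq n$, and $\{N>n\}$ close all the remaining gaps.
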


\begin{lemma}\label{azuma-inequality}
Consider a supermartingale $\{Y[t]\}_{t=0}^\infty$ with bounded difference
\[|Y[t]-Y[t-1]|\leq c_2,~\forall t,\]
and $Y[0]=0$,
For any fixed scale $T$ and any $\lambda>0$, the following concentration inequality holds:
\[Pr(Y[T]\geq\lambda)\leq\exp\left(-\frac{\lambda^2}{2Tc_2^2}\right).\]
\end{lemma}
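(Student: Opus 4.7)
The plan is to invoke the classical Chernoff / exponential-moment method. Fix any $s>0$; applying Markov's inequality to $e^{sY[T]}$ gives $Pr(Y[T]\geq\lambda)\leq e^{-s\lambda}\expect{e^{sY[T]}}$, so the whole task reduces to controlling the moment generating function of $Y[T]$ and then optimizing over $s$ at the end. Writing $Y[T]=\sum_{t=1}^T D[t]$ with $D[t]=Y[t]-Y[t-1]$, the supermartingale hypothesis together with the bounded-difference assumption yields $\expect{D[t]\mid\mathcal{F}_{t-1}}\leq 0$ and $|D[t]|\leq c_2$ almost surely (where $\{\mathcal{F}_t\}$ is the filtration with respect to which $\{Y[t]\}$ is a supermartingale), which is precisely the input needed for a one-step Hoeffding-type estimate.

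The key per-step claim I would establish is $\expect{e^{sD[t]}\mid\mathcal{F}_{t-1}}\leq e^{s^2 c_2^2/2}$. This follows from convexity of $x\mapsto e^{sx}$ on $[-c_2,c_2]$: for every $x$ in that interval,
\[
e^{sx}\leq \frac{c_2-x}{2c_2}\,e^{-sc_2}+\frac{c_2+x}{2c_2}\,e^{sc_2}.
\]
Substituting $x=D[t]$ and taking conditional expectations, the right-hand side becomes affine in $\expect{D[t]\mid\mathcal{F}_{t-1}}$ with slope $(e^{sc_2}-e^{-sc_2})/(2c_2)>0$ for $s>0$. Therefore the worst case over supermartingale increments is attained at conditional mean zero, giving $\cosh(sc_2)\leq e^{s^2 c_2^2/2}$ via the standard power-series comparison $\cosh(u)\leq e^{u^2/2}$. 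This is the only spot where the supermartingale (rather than martingale) hypothesis needs any comment, and the monotonicity observation above handles it cleanly.

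Finally, iterating the tower rule $\expect{e^{sY[t]}}=\expect{e^{sY[t-1]}\expect{e^{sD[t]}\mid\mathcal{F}_{t-1}}}\leq e^{s^2 c_2^2/2}\expect{e^{sY[t-1]}}$ from $t=T$ down to $t=1$, and using $Y[0]=0$, produces $\expect{e^{sY[T]}}\leq e^{Ts^2 c_2^2/2}$. Combining with the Markov step gives
\[
Pr(Y[T]\geq\lambda)\leq \exp\!\left(-s\lambda+\tfrac{1}{2}Tc_2^2 s^2\right),
\]
and the quadratic in $s$ is minimized at $s^{\star}=\lambda/(Tc_2^2)$, producing the advertised bound $\exp(-\lambda^2/(2Tc_2^2))$. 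No individual step is genuinely hard, as this is a textbook Azuma-type argument; the one subtlety worth flagging is precisely the monotonicity observation that lets the supermartingale hypothesis substitute for the more common martingale one in the per-step MGF bound.
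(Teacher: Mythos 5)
Your proposal is correct and follows essentially the same route as the paper's proof: the same convexity-based one-step bound $\expect{e^{s(Y[t]-Y[t-1])}\mid\mathcal{F}_{t-1}}\leq e^{s^2c_2^2/2}$ (with the supermartingale hypothesis entering through the positive coefficient of the conditional mean, exactly as you note), the same tower-property iteration to $\expect{e^{sY[T]}}\leq e^{Ts^2c_2^2/2}$, and the same Chernoff optimization at $s^\star=\lambda/(Tc_2^2)$. No gaps to report.
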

This is one generalized version of Azuma's inequality to supermartingales. Similar types of supermartingale concentration inequalities and proofs can also be found in Chapter 2 of \cite{old&new}.
\begin{proof}[proof of Lemma \ref{azuma-inequality}]
We first establish a generalized Hoeffding's lemma as follows: For any $i\in\mathbb{N}$ and any $s>0$,
\begin{equation}\label{hoeffding_lemma}
\expect{\left.e^{s(Y[t]-Y[t-1])}\right|\mathcal{F}_{t-1}}\leq e^{s^2c_2^2/2}.
\end{equation}
For simplicity of notations, let $X\triangleq Y[t]-Y[t-1]$, then we have for any $t\in\mathbb{N}$ and any $s>0$,
\begin{align*}
e^{sX}=&e^{sc_2\cdot\frac {X}{c_2}}\leq\frac{1-\frac {X}{c_2}}{2}e^{-sc_2}+\frac{1+\frac {X}{c_2}}{2}e^{sc_2}\\
\leq&\frac12\left(e^{sc_2}+e^{-sc_2}\right)+\frac{X}{2c_2}\left(e^{sc_2}-e^{-sc_2}\right)\\
\leq&e^{s^2c_2^2/2}+\frac{X}{2c_2}\left(e^{sc_2}-e^{-sc_2}\right),
\end{align*}
where the first inequality follows from convexity of the function $e^{sc_2x}$ and the fact that $\frac {X}{c_2}\in[-1,1]$, the third inequality follows from taking Taylor expansion of the term $e^{sc_2}+e^{-sc_2}$. Take conditional expectation from both sides gives
\begin{align*}
\expect{e^{sX}|\mathcal{F}_{t-1}}&=e^{s^2c_2^2/2}+\frac{\expect{X|\mathcal{F}_{t-1}}}{2c_2}\left(e^{sc_2}-e^{-sc_2}\right)\\
&\leq e^{s^2c_2^2/2},
\end{align*}
where the inequality follows from the fact that $Y[t]$ is a supermartingale thus $\expect{X|\mathcal{F}_{t-1}}\leq0$. This finishes the proof of \eqref{hoeffding_lemma}. Now, consider
\begin{align*}
\expect{e^{s Y[T]}}=&\expect{e^{s(Y[T]-Y[T-1]+Y[T-1])}}\\
=&\expect{\expect{\left.e^{s(Y[T]-Y[T-1]+Y[T-1])}\right|\mathcal{F}_{T-1}}}\\
=&\expect{e^{s Y[T-1]}\expect{\left.e^{s(Y[T]-Y[T-1])}\right|\mathcal{F}_{T-1}}}\\
\leq&\expect{e^{s Y[T-1]}}e^{s^2c_2^2/2}.
\end{align*}
By recursively applying above technique $T-1$ times with the fact that $Y[0]=0$, we get
$$\expect{e^{s Y[T]}}\leq e^{\frac{s^2Tc_2^2}{2}}.$$
Finally, by applying above inequality,
\begin{align*}
Pr\left(Y[T]\geq\lambda\right)=&Pr\left(e^{sY[T]}\geq e^{s\lambda}\right)
\leq\frac{\expect{e^{sY[T]}}}{e^{s\lambda}}\\
\leq& e^{-s\lambda+\frac{s^2Tc_2^2}{2}}.
\end{align*}
Optimize the bound regarding $s$ on the right hand side gives the optimal $s^*=\frac{\lambda}{Tc_2^2}$ and the bound follows.
\end{proof}

\section{Proof of Property 2 in Lemma 3}\label{app_property_2}
\begin{proof}
This appendix proves that the truncated process $\{Y[t]\}_{t=0}^\infty$ has bounded difference. We have for any $t\geq0$,
\begin{align*}
  &|Y[t]-Y[t-1]|\\
  =&\left|X[t\wedge(\tau-1)]-X[(t-1)\wedge(\tau-1)]\right|\\
  =&\left|\left(X[t\wedge(\tau-1)]-X[(t-1)\wedge(\tau-1)]\right)1_{\{\tau-1\geq t \}}\right.\\
    &\left.+\left(X[t\wedge(\tau-1)]-X[(t-1)\wedge(\tau-1)]\right)1_{\{\tau-1< t \}}\right|\\
  =&\left|X[t]-X[t-1]\right|1_{\{\tau-1\geq t \}}.
\end{align*}
On the other hand, according to the definition of $X[t]$ in Lemma \ref{supMG}, we have
\begin{align*}
\left|X[t]-X[t-1]\right|&=\left|V(z_0[t]-z^{opt})+\sum_{l=1}^LQ_l[t]z_l[t]\right|\\
&\leq 2Vz_{\max}+\left|\sum_{l=1}^LQ_l[t]z_l[t]\right|\\
&\leq 2Vz_{\max}+B\|\mathbf{Q}[t]\|,
\end{align*}
where the second inequality follows from the Cauchy-Schwarz inequality and Assumption \ref{assumption-1}.
On the set $\{\tau-1\geq t \}$, $\|\mathbf{Q}[t]\|\leq c_1$, thus, we get,
\begin{align*}
\left|X[t]-X[t-1]\right|1_{\{\tau-1\geq t \}}&\leq\left(2Vz_{\max}+Bc_1\right)1_{\{\tau-1\geq t \}}\\
&\leq 2Vz_{\max}+Bc_1,
\end{align*}
finishing the proof.
\end{proof}

\section{Proof of Theorem 2}\label{proof-theorem-2}
\begin{enumerate}
  \item We first show the $\varepsilon$ near optimality. According to \eqref{pre_dpp_upper_bound}, it follows
  \begin{align*}
      &\frac{1}{T}\sum_{t=1}^T\left(V(z_0[t]-z^{opt})+\sum_{l=1}^LQ_l[t]z_l[t]\right)\\
      \geq&\frac{1}{T}\sum_{t=1}^T\left(\Delta[t]-\frac{B^2}{2}+V(z_0[t]-z^{opt})\right)\\
      =&\frac{1}{T}\|\mathbf{Q}[t+1]\|^2-\frac{B^2}{2}+\frac{1}{T}\sum_{t=1}^TV(z_0[t]-z^{opt}).
  \end{align*}
  Thus, by Lemma \ref{main_lemma}, with probability $1-\delta$,
  \begin{align*}
      &\frac{1}{T}\|\mathbf{Q}[t+1]\|^2-\frac{B^2}{2}+\frac{1}{T}\sum_{t=1}^TV(z_0[t]-z^{opt})\\
      \leq& CV\frac{\max\left\{\log T\log^{1/2}\frac2\delta,\log^{3/2}\frac2\delta\right\}}{\sqrt{T}},
  \end{align*}
  which implies
  \begin{align*}
  \frac{1}{T}\sum_{t=1}^Tz_0[t]\leq& z^{opt}+\frac{C\max\left\{\log T\log^{1/2}\frac2\delta,\log^{3/2}\frac2\delta\right\}}{\sqrt{T}}\\
  &+\frac{B^2}{2V}.
  \end{align*}
  Now, substituting $V=1/\varepsilon$ gives
  \begin{align*}
  \frac{1}{T}\sum_{t=1}^Tz_0[t]\leq& z^{opt}+\frac{C\max\left\{\log T\log^{1/2}\frac2\delta,\log^{3/2}\frac2\delta\right\}}{\sqrt{T}}\\
  &+\frac{B^2\varepsilon}{2},
  \end{align*}
  where $C$ has the order $\mathcal{O}(1)$ when $\varepsilon$ is small.
  Then, substitute $T\geq\frac{1}{\varepsilon^2}\max\left\{\log^2\frac1\varepsilon\log\frac2\delta,~\log^3\frac2\delta\right\}$ on the right hand side gives \eqref{single-column}.
  \begin{figure*}
  \normalsize
  \begin{align}
      \frac{C\max\left\{\log T\log^{1/2}\frac2\delta,\log^{3/2}\frac2\delta\right\}}{\sqrt{T}}
      \leq&\frac{C\max\left\{\left(2\log\frac1\varepsilon
      +\log\left(\max\left\{\log\frac1\varepsilon\log^{\frac12}\frac2\delta,~\log^{\frac32}\frac2\delta\right\}\right)\right)\log^{\frac12}\frac2\delta
      ,~\log^{\frac32}\frac2\delta\right\}}
      {\frac1\varepsilon\max\left\{\log\frac1\varepsilon\log^{\frac12}\frac2\delta,~\log^{\frac32}\frac2\delta\right\}}\nonumber\\
      \leq&\frac{C\left(3\log\frac1\varepsilon\log^{\frac12}\frac2\delta+3\log^{\frac32}\frac2\delta\right)}
      {\frac1\varepsilon\max\left\{\log\frac1\varepsilon\log^{\frac12}\frac2\delta,~\log^{\frac32}\frac2\delta\right\}}\label{single-column}
      \leq 6C\varepsilon.
  \end{align}
  \end{figure*}
  Thus, with probability at least $1-\delta$,
  \[\frac{1}{T}\sum_{t=1}^Tz_0[t]\leq z^{opt}+\mathcal{O}(\varepsilon).\]

  \item We then show the constraint violation. By queue updating rule \eqref{queue_update}, for any $t$ and any $l\in\{1,2,\ldots,L\}$, one has
  \[Q_l[t+1]\geq Q_l[t]+z_l[t].\]
  Thus, summing over $t=1,2,\ldots,T$,
  \[Q_l[T+1]\geq Q_l[1]+\sum_{t=1}^Tz_l[t].\]
  Since $Q_l[1]=0$,
  \begin{equation}\label{inter_constraint_violation}
  \frac1T\sum_{t=1}^Tz_l[t]\leq\frac{Q_l[T+1]}{T}.
  \end{equation}
  Recall from Corollary \ref{geo_queue_bound} that we have
  \[Pr\left(\|\mathbf{Q}[T+1]\|>c_1\right)\leq De^{-rc_1}.\]
  Let $De^{-rc_1}=\delta$ and substituting the definition of $D$ in Corollary \ref{geo_queue_bound} give
  $$c_1=\frac1r\log\frac D\delta=\frac1r\log\frac1\delta+C_0V.$$
  Thus,
  \[Pr\left(\|\mathbf{Q}[T+1]\|>\frac1r\log\frac1\delta+C_0V\right)<\delta.\]
  This implies with probability at least $1-\delta$, for any $l\in\{1,2,\ldots,L\}$,
  \[\frac1TQ_l[T+1]\leq\frac{1}{rT}\log\frac1\delta+\frac{C_0V}{T}.\]
  Substitute $T\geq\frac{1}{\varepsilon^2}\max\left\{\log^2\frac1\varepsilon\log\frac2\delta,~\log^3\frac2\delta\right\}$ and $V=1/\varepsilon$ on the right hand side gives
  \[\frac1TQ_l[T+1]\leq\mathcal{O}(\varepsilon).\]
  By \eqref{inter_constraint_violation}, we have with probability at least $1-\delta$,
  \[\frac1T\sum_{t=1}^Tz_l[t]\leq\mathcal{O}(\varepsilon).\]
\end{enumerate}
Since both \eqref{near-optimality} and \eqref{constraint-violation} hold individually with probability at least $1-\delta$, by union bound, with probability at least $1-2\delta$, we have the two conditions hold simultaneously.

\section{Proof of Lemma 10}\label{proof-of-telescoping}
Consider any inter-visit period $n_j$ to $n_{j+1}$,

1. If $\tau_j=1$, then,
\begin{align*}
&\left|\frac{1}{2}\left(Q_1[n_{j+1}]^2-Q_1[n_j]^2\right)-(Q[n_j]\wedge C_0V)z_1[n_j]\right|\\
=&\left|\frac{1}{2}\left(Q_1[n_j+1]^2-Q_1[n_j]^2\right)-Q[n_j]z_1[n_j]\right|\\
\leq& \frac{1}{2}z_1[n_j]^2\leq \frac12B^2,
\end{align*}
where the equality follows from the fact that $Q_1[n_j]\in[0,C_0V]$, and the inequality follows by expanding the $Q_1[n_j+1]^2$ term.

2. If $\tau_j>1$, then,
\begin{align*}
&\left|\frac{1}{2}\left(Q_1[n_{j}+\tau_j]^2-Q_1[n_j]^2\right)-\sum_{t=n_j}^{n_j+\tau_j-1}(Q_1[t]\wedge C_0V)z_1[t]\right|\\
\leq&\left|\frac{1}{2}\left(Q_1[n_{j}+\tau_j]^2-Q_1[n_j+1]^2\right)-C_0V\sum_{t=n_j+1}^{n_j+\tau_j-1}z_1[t]\right|\\
 &+\left|\frac{1}{2}\left(Q_1[n_j+1]^2-Q_1[n_j]^2\right)-Q[n_j]z_1[n_j]\right|\\
 \leq&\left|\frac{1}{2}\left(Q_1[n_{j}+\tau_j]^2-Q_1[n_j+1]^2\right)-C_0V\sum_{t=n_j+1}^{n_j+\tau_j-1}z_1[t]\right|\\
 &+B^2/2\\
 \leq&\left|\frac{1}{2}\left(Q_1[n_{j}+\tau_j]^2-Q_1[n_j+1]^2\right)-Q_1[n_j+1]\right.\\
      &\left.\cdot\sum_{t=n_j+1}^{n_j+\tau_j-1}z_1[t]\right|+\left|(Q_1[n_j+1]-C_0V)\sum_{t=n_j+1}^{n_j+\tau_j-1}z_1[t]\right|\\
      &+B^2/2
\end{align*}
where the first inequality follows from $Q_1[t]\wedge C_0V=C_0V$ from $n_j+1$ to $n_j+\tau_j-1$ and triangle inequality, the second inequality follows from $\tau_j=1$ case and the last inequality follows from triangle inequality again. Now we try to bound the two terms in the last inequality separately.
\begin{itemize}
  \item $\left|\frac{1}{2}\left(Q_1[n_{j}+\tau_j]^2-Q_1[n_j+1]^2\right)-Q_1[n_j+1]\sum_{t=n_j+1}^{n_j+\tau_j-1}\right.$ $\left.z_1[t]\right|\leq 2B^2$:

Since $V\geq B/C_0$ and $Q_1[t]> C_0V$ for any $t\in\{n_j+1, \ldots,n_j+\tau_j-1\}$, according to the queue updating rule, $Q_1[t+1]=Q_1[t]+z_1[t],~\forall t\in\{n_j, \ldots,n_j+\tau_j-1\}$. This gives
\[Q_1[n_j+\tau_j]=Q_1[n_j+1]+\sum_{t=n_j+1}^{n_j+\tau_j-1}z_1[t].\]
Thus,
\begin{align*}
&\left|\frac{1}{2}(Q_1[n_{j}+\tau_j]^2-Q_1[n_j+1]^2)- Q_1[n_j+1]\right.\\
&\left.\cdot\sum_{t=n_j+1}^{n_j+\tau_j-1}z_1[t]\right|=\frac{1}{2}\left|\sum_{t=n_j+1}^{n_j+\tau_j-1}z_1[t]\right|^2.
\end{align*}
Notice that
\begin{align*}
&C_0V\geq Q_1[n_{j}+\tau_j]\geq Q_1[n_{j}+\tau_j-1]-B\geq C_0V-B,\\
&C_0V\leq Q_1[n_{j}+1]\leq Q_1[n_{j}]+B\leq C_0V+B,
\end{align*}
thus,
$\left|\sum_{t=n_j+1}^{n_j+\tau_j-1}z_1[t]\right|^2=\left|Q_1[n_{j}+\tau_j]-Q_1[n_{j}+1]\right|^2\\
\leq (2B)^2=4B^2$,
and this gives the desired bound.

  \item $\left|(Q_1[n_j+1]-C_0V)\sum_{t=n_j+1}^{n_j+\tau_j-1}z_1[i]\right|\leq (\tau_j-1)B^2$:

  Since $C_0V\leq Q[n_j+1]\leq C_0V+B$, it follows
  \[|Q[n_j+1]-C_0V|\leq B.\]
  Thus, the desired bound follows from $|z_1[t]|\leq B$.
\end{itemize}

Above all, we get for $\tau_j>1$,
\begin{align*}
&\left|\frac{1}{2}\left(Q_1[n_{j}+\tau_j]^2-Q_1[n_j]^2\right)-\sum_{t=n_j}^{n_j+\tau_j-1}(Q_1[t]\wedge C_0V)z_1[t]\right|\\
&\leq2B^2+(\tau_j-1)B^2+B^2/2=\left(\tau_j+\frac32\right)B^2.
\end{align*}

Thus, for any $\tau_j$,
\begin{align*}
&\left|\frac{1}{2}\left(Q_1[n_{j}+\tau_j]^2-Q[n_j]^2\right)-\sum_{t=n_j}^{n_j+\tau_j-1}(Q_1[t]\wedge C_0V)z_1[t]\right|\\
&\leq\left(\tau_j+\frac32\right)B^2,
\end{align*}
Take the sums of $j$ from 0 to $J-1$ from both sides gives
\begin{align*}
&\sum_{j=0}^{J-1}\left|\sum_{t=n_j}^{n_j+\tau_j-1}(Q_1[t]\wedge C_0V)z_1[t]
-\frac{1}{2}\left(Q_1[n_{j}+\tau_j]^2\right.\right.\\
&\left.\left.-Q_1[n_j]^2\right)\right|\leq \sum_{j=0}^{J-1}\left(\tau_j+\frac32\right)B^2.
\end{align*}
By triangle inequality,
\begin{align*}
&\left|\sum_{t=0}^{n_J-1}(Q_1[t]\wedge C_0V)z_1[t]-\frac{1}{2}Q_1[n_{J}]^2\right|\\
&\leq\sum_{j=0}^{J-1}\left(\tau_j+\frac32\right)B^2\leq \frac52B^2(n_{J}-1),
\end{align*}
where the last inequality follows from the fact that $\sum_{j=0}^{J-1}\tau_j=n_J-1$ and $J\leq n_J-1$.

\section{Proof of Theorem 3}\label{proof-theorem-3}
First of all, substitute the bound of $\sum_{t=1}^T(Q_1[t]\wedge C_0V) z_1[t]$ derived in Lemma \ref{appr-telescoping-sum} into Lemma \ref{main_lemma-2} gives,
with probability at least $1-\delta$,
\[\frac1T\sum_{t=1}^TV(z_0[t]-z^{opt})\leq2C_2V\frac{\log(1/\delta)}{\sqrt{T}}+\frac52B^2.\]
Divide $V$ from both sides gives
\[\frac1T\sum_{t=1}^T(z_0[t]-z^{opt})\leq2C_2\frac{\log(1/\delta)}{\sqrt{T}}+\frac52\frac{B^2}{V}.\]
Substitute $V=1/\varepsilon$ and $T=\frac{1}{\varepsilon^2}\log^2\frac1\delta$ into the right hand side of the above inequality gives
\begin{equation}\label{inter-near-optimality-2}
\frac1T\sum_{t=1}^T(z_0[t]-z^{opt})\leq2C_2\varepsilon+\frac52B^2\varepsilon=\mathcal{O}(\varepsilon).
\end{equation}

Then, we do the constraint violation.
By queue updating rule \eqref{queue_update}, for any $t$, one has
  \[Q_1[t+1]\geq Q_1[t]+z_1[t].\]
  Thus, summing over $t=1,2,\ldots,T$,
  \[Q_1[T+1]\geq Q_1[1]+\sum_{t=1}^Tz_1[t].\]
  Since $Q_1[1]=0$,
  \begin{equation}\label{inter_constraint_violation-2}
  \frac1T\sum_{t=1}^Tz_1[t]\leq\frac{Q_1[T+1]}{T}.
  \end{equation}
  Recall from Corollary \ref{geo_queue_bound} that we have
  \[Pr\left(Q_1[T+1]>c_1\right)\leq De^{-rc_1}.\]
  Let $De^{-rc_1}=\delta$ and substituting the definition of $D$ in Corollary \ref{geo_queue_bound} give
  $$c_1=\frac1r\log\frac D\delta=\frac1r\log\frac1\delta+C_0V.$$
  Thus,
  \[Pr\left(Q_1[T+1]>\frac1r\log\frac1\delta+C_0V\right)<\delta.\]
  This implies with probability at least $1-\delta$,
  \[\frac1TQ_1[T+1]\leq\frac{1}{rT}\log\frac1\delta+\frac{C_0V}{T}.\]
Substitute $V=1/\varepsilon$ and $T=\frac{1}{\varepsilon^2}\log^2\frac1\delta$ into the right hand side of the above inequality gives
  \[\frac1TQ_1[T+1]\leq\mathcal{O}(\varepsilon).\]
  By \eqref{inter_constraint_violation-2}, we have with probability at least $1-\delta$,
  \[\frac1T\sum_{t=1}^Tz_1[t]\leq\mathcal{O}(\varepsilon).\]

Since both \eqref{near-optimality-2} and \eqref{constraint-violation-2} hold individually with probability at least $1-\delta$, by union bound, with probability at least $1-2\delta$, we have the two conditions hold simultaneously.

\bibliographystyle{unsrt}
\bibliography{bibliography/refs}

\begin{IEEEbiographynophoto}{Xiaohan Wei}
received the B.S. degree in Electrical
Engineering and information science from the
University of Science and Technology of China,
Hefei, China, in 2012, and the M.S. degree with
honor in electrical engineering from the University
of Southern California, Los Angeles, CA, USA, in
2014, and is currently pursuing the Ph.D. degree
in electrical engineering and M.A. degree in applied
mathematics at the University of Southern
California.
His research is in the area of stochastic analysis and
statistical learning theory.
\end{IEEEbiographynophoto}

\begin{IEEEbiographynophoto}{Hao Yu}
received the B.Eng. in Electrical Engineering from Xi'an Jiaotong University, Xi'an, China, in 2008, the M.Phil. degree in Electrical Engineering from the Hong Kong University of Science and Technology, Hong Kong, in 2011. He is currently working towards the Ph.D degree at the Electrical Engineering Department, University of Southern California, Los Angeles. His research interests are in the areas of optimization theory, design and analysis of algorithms, and network optimization.
\end{IEEEbiographynophoto}
\begin{IEEEbiographynophoto}{Michael J. Neely}
received B.S. degrees in both Electrical Engineering and Mathematics from the University of Maryland, College Park, in 1997. He was then awarded a 3-year Department of Defense NDSEG Fellowship for graduate study at the Massachusetts Institute of Technology, where he received an M.S. degree in 1999 and a Ph.D. in 2003, both in Electrical Engineering. He joined the faculty of Electrical Engineering at the University of Southern California in 2004, where he is currently an Associate Professor. His research is in the area of stochastic network optimization. Michael received the NSF Career award in 2008 and the Viterbi School of Engineering Junior Research Award in 2009. He is a member of Tau Beta Pi and Phi Beta Kappa.
\end{IEEEbiographynophoto}

\end{document}